\theoremstyle{plain} 
\newtheorem{thm}{Theorem}[section]
\newtheorem{prop}[thm]{Proposition}
\newtheorem{lemma}[thm]{Lemma}
\newtheorem*{remark}{Remark}
\theoremstyle{definition}
\theoremstyle{remark}
\numberwithin{equation}{section}
\newcommand{\mysectionname}{}
\newcommand{\newsection}[1]{\section{#1}\renewcommand{\mysectionname}{\uppercase{#1}}}
\newcommand{\g}[1]{\mathit{G}_{#1}}
\newcommand{\f}[1]{F_{#1}}
\date{November 22, 2014, Revised: October 9, 2015}
\begin{document}
\title{superconvergence to freely infinitely divisible distributions}
\author{Hari Bercovici}
\address{Hari Bercovici: Department of Mathematics, Rawles Hall, 831 East Third Street, Indiana University, Bloomington, Indiana 47405, USA}
\email{bercovic@indiana.edu}
\author{Jiun-Chau Wang}
\address{Jiun-Chau Wang: Department of Mathematics and Statistics, University of Saskatchewan, Saskatoon,
Saskatchewan S7N 5E6, Canada}
\email{jcwang@math.usask.ca}
\author{Ping Zhong}
\address{Ping Zhong: School of Mathematics and Statistics, Wuhan University, No. 299 Ba Yi Road, Wuhan, Hubei, China, 430072; and Department of Mathematics and Statistics, Fylde College, Lancaster University, Lancaster, LA1 4YF, United Kingdom}
\email{p.zhong2@lancaster.ac.uk}

\begin{abstract}
We prove superconvergence results for all freely infinitely divisible distributions. 
Given a nondegenerate freely infinitely divisible distribution $\nu$, let $\mu_n$ be a sequence of probability measures
and let $k_n$ be a sequence of integers tending to infinity such that $\mu_n^{\boxplus k_n}$ converges weakly to $\nu$. We show that
the density $d\mu_n^{\boxplus k_n}/dx$ converges uniformly, as well as in all $L^{p}$-norms for $p>1$, to the density of $\nu$
except possibly in the neighborhood of one point. Applications include the global superconvergence to freely stable laws and that to free compound Poisson laws over the whole real line.
\end{abstract}

\subjclass[2000]{46L54}

\keywords{Freely infinitely divisible law; Free convolution; Superconvergence}

\maketitle
\newsection{Introduction}
Consider a sequence $\{X_i\}_{i=1}^{\infty}$ of independent identically distributed
random variables with zero mean and unit variance. The classical central limit theorem states
that variables
\[
  S_n=\frac{X_1+X_2+\cdots+X_n}{\sqrt{n}}
\]
converge in distribution to the standard normal law. Note that the variables $S_n$ might
always be discrete, even though their limit is absolutely continuous. This means that the convergence of $S_n$
to a normal law must be expressed in terms of distribution functions, rather than densities.

Assume now that, instead of being independent, the variables $\{X_i\}_{i=1}^{\infty}$
are \emph{freely} independent in the sense of Voiculescu \cite{Basic}.
We still assume them identically distributed with zero mean and unit variance. Under the additional 
condition that the variables are bounded, it was shown in \cite{BV1995sup} that 
the distribution of $S_n$ is absolutely continuous for sufficiently large $n$, and these densities
converge uniformly, along with all of their derivatives, to the density of the semicircle law
\[
   \frac{1}{2\pi}\sqrt{4-t^2}
\]
on any interval $[a,b]\subset (-2,2)$. This phenomenon was called \emph{superconvergence}
in that paper. The assumption that $X_i$ be bounded was removed in subsequent work of the second 
author \cite{Wang2010}. Even when the variables $X_i$ are not identically distributed, 
but are uniformly bounded, the support of $S_n$ was shown
by Kargin \cite{Kargin2007} to converge to the interval $[-2,2]$ as $n\rightarrow \infty$. See also \cite{mulsuper} for free multiplicative superconvergence results.

The purpose of this paper is to demonstrate that the phenomenon of superconvergence
is not limited to convergence to the semicircle law. 
Consider a nondegenerate probability measure $\nu$ on $\mathbb{R}$, which is infinitely divisible in the 
free sense (that is, $\boxplus$-infinitely divisible). It is known that
its Cauchy transform
\begin{equation}\label{eq:1.1}
   \g{\nu}(z)=\int_{-\infty}^{+\infty}\frac{1}{z-t}\,d\nu(t)
\end{equation}
defined for $\Im z > 0$ extends continuously to all points $z\in\mathbb{R}$ with
at most one exception $t_{\nu}$. The measure $\nu$ is absolutely continuous on 
$\mathbb{R}\backslash \{t_{\nu} \}$
and its density is locally analytic when strictly positive. 
To formulate our result, assume that for every positive integer $n$, we are given $k_n$ freely independent,
identically distributed random variables $X_{n1},X_{n2},\cdots,X_{n k_n}$ such that $\lim_{n\rightarrow \infty}k_n=\infty$ and
the sums
\[
 S_n=X_{n1}+X_{n2}+\cdots+X_{n k_n}
\]
converge in distribution to the measure $\nu$. (Necessary and sufficient conditions for such a convergence to take place are found in \cite{BPata1999}.) Our main result, Theorem 4.1, implies the following statement.
For convenience, we denote by $D_{\nu}$ the singleton $\{t_{\nu} \}$ if this point
exists. Otherwise, $D_{\nu}=\varnothing $. 
\begin{thm}\label{thm:1.1}
Given any open set $U\supset D_{\nu}$, 
the distribution $\nu_n$ of $S_n$ is absolutely continuous on $\mathbb{R}\backslash U$ for sufficiently large $n$,
and the density of $\nu_n$ converges  to the density of $\nu$ uniformly and in $L^{p}$-norms for $p>1$ on $\mathbb{R}\backslash U$. 
\end{thm}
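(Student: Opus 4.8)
My plan is to work with boundary values of Cauchy transforms and with the subordination function of $\mu_n^{\boxplus k_n}$. Write $\mu_n$ for the common law of the $X_{nj}$, so $\nu_n=\mu_n^{\boxplus k_n}$, and recall that an absolutely continuous measure $\rho$ has density $x\mapsto-\pi^{-1}\Im G_\rho(x+i0)$ wherever the nontangential boundary value exists. First I would record two structural facts. Since $k_n\to\infty$ one may assume $k_n\ge2$, and then, by the regularity theory of $\mu^{\boxplus t}$ for $t\ge1$, $\nu_n$ has no singular continuous part, its absolutely continuous part has a locally analytic density, and it carries at most one atom, say at $a_n$; moreover $\mu_n\to\delta_0$ weakly, since otherwise $\nu_n=\mu_n^{\boxplus k_n}$ could not converge to a nondegenerate finite limit while $k_n\to\infty$. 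Next I would show that $a_n$, when it exists, eventually lies in $U$: an atom of $\nu_n$ at $a_n$ forces the subordination function $\omega_n$ (below) to send $a_n+i0$ to an atom of $\mu_n$ of mass tending to $1$, hence to $0$, after which weak convergence of $\nu_n$ pins $a_n$ to the unique possible atom $t_\nu$ of $\nu$, so $a_n\to t_\nu\in D_\nu\subset U$. Thus, for large $n$, $\nu_n$ is absolutely continuous on $\mathbb R\setminus U$ with continuous density $v_n=-\pi^{-1}\Im G_{\nu_n}(\cdot+i0)$ there, and the statement reduces to $G_{\nu_n}(x+i0)\to G_\nu(x+i0)$, uniformly on $\mathbb R\setminus U$ and with the appropriate $L^p$ control.

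Next I would bring in the subordination function $\omega_n\colon\mathbb C^+\to\mathbb C^+$ for $\nu_n=\mu_n^{\boxplus k_n}$: it satisfies $G_{\nu_n}=G_{\mu_n}\circ\omega_n$, $\Im\omega_n(z)\ge\Im z$, and $z=k_n\omega_n(z)-(k_n-1)F_{\mu_n}(\omega_n(z))$; equivalently $\omega_n=H_n^{-1}$ on its range, where $H_n:=k_n\,\mathrm{id}-(k_n-1)F_{\mu_n}$. Because $\mu_n\to\delta_0$ we have $G_{\mu_n}\to G_{\delta_0}$ and $F_{\mu_n}\to\mathrm{id}$ locally uniformly on $\mathbb C^+$, and the free Lévy--Khintchine convergence criterion of \cite{BPata1999} shows that $H_n$ is locally bounded on $\mathbb C^+$ uniformly in $n$ and converges there to the functional inverse $F_\nu^{-1}=\mathrm{id}+\varphi_\nu$, which by the Lévy--Khintchine representation of $\varphi_\nu$ continues analytically to $\mathbb C\setminus\mathrm{supp}\,\sigma$. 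A normal families argument (each $\omega_n$ maps $\mathbb C^+$ into $\mathbb C^+$ with $\Im\omega_n\ge\Im z$; any subsequential limit $\omega$ thus lies in $\mathbb C^+$ and satisfies $G_\nu=G_{\delta_0}\circ\omega=1/\omega$, so $\omega=F_\nu$) then gives $\omega_n\to F_\nu$ locally uniformly on $\mathbb C^+$, hence $G_{\nu_n}\to G_\nu$ there. The heart of the theorem is to upgrade this to the real boundary, away from $D_\nu$, with a genuine gain of regularity.

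The key local step I would carry out is as follows. Fix $x_0\in\mathbb R\setminus D_\nu$ and put $w_0:=F_\nu(x_0+i0)$; since $x_0\notin D_\nu$, $w_0\ne0$ and $F_\nu^{-1}$ is analytic near $w_0$ with $F_\nu^{-1}(w_0)=x_0$. If $x_0$ lies in $\{v>0\}$, then $w_0\in\mathbb C^+$ and $F_\nu^{-1}$ is a local biholomorphism at $w_0$ (its real critical values are edges of $\mathrm{supp}\,\nu$, where $v=0$). On a small closed disc $\overline{D(w_0,r)}\subset\mathbb C^+$ we have $H_n\to F_\nu^{-1}$ uniformly, so Rouché's theorem makes $H_n$ carry $D(w_0,r)$ biholomorphically onto a full complex neighbourhood of $x_0$ for large $n$; the local inverse agrees with $\omega_n$ on $\mathbb C^+$ near $x_0$, so $\omega_n$ continues analytically across a fixed real interval $I_0\ni x_0$ with $\omega_n(I_0)\subset\overline{D(w_0,r)}\subset\mathbb C^+$ and $\omega_n\to F_\nu$ uniformly on $\overline{I_0}$. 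Since $G_{\mu_n}\to G_{\delta_0}$ uniformly on $\overline{D(w_0,r)}$, the composition $G_{\nu_n}=G_{\mu_n}\circ\omega_n$ extends analytically to $I_0$ and converges to $G_{\delta_0}\circ F_\nu=G_\nu$ uniformly on $\overline{I_0}$, so $v_n\to v$ uniformly there. This is the superconvergence mechanism: $\omega_n$ pushes the boundary point $x_0$ strictly into $\mathbb C^+$, where $G_{\mu_n}$ is harmless. If instead $v\equiv0$ near $x_0$ (so $x_0\notin\mathrm{supp}\,\nu$), then $w_0$ is a nonzero real, the Rouché step still gives $\omega_n$ analytic near $x_0$ with values near $w_0$, but one must further exploit that $\mu_n$ has asymptotically no mass near $w_0$, together with the exclusion of $\nu_n$-atoms outside $U$, to get $\Im G_{\nu_n}(x+i0)\to0$ uniformly near $x_0$, i.e. $v_n\to0=v$; the edges of $\mathrm{supp}\,\nu$ are treated similarly, using the square-root behaviour of $F_\nu$ at $w_0$ coming from the Lévy--Khintchine form. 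Covering a compact $K\subset\mathbb R\setminus D_\nu$ by finitely many such intervals yields $v_n\to v$ uniformly on $K$.

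Finally, to pass from compacta to all of the (possibly unbounded) set $\mathbb R\setminus U$ and to $L^p$-norms for $p>1$, I would combine the uniform tightness of $\{\nu_n\}$ — giving $\sup_n\nu_n(\{|x|>R\})\to0$ as $R\to\infty$ — with a uniform pointwise bound on $v_n(x)=\pi^{-1}\Im F_{\nu_n}(x+i0)/|F_{\nu_n}(x+i0)|^2$ obtained from the control on $\omega_n$, so that $\sup_{|x|>R}v_n(x)$ and $\int_{|x|>R}v_n^{\,p}$ become arbitrarily small uniformly in $n$. I expect the main obstacle to be the case $v(x_0)=0$ in the previous paragraph: there the regularizing push of $\omega_n$ into $\mathbb C^+$ degenerates, and the uniform vanishing of $v_n$ must be extracted from the asymptotic absence of $\mu_n$-mass near $w_0\ne0$, the location of the atoms of $\nu_n$, and the fine structure of $\varphi_\nu$ near $\mathrm{supp}\,\sigma$ — in effect, from a precise comparison of $\mathrm{supp}\,\nu_n$ with $\mathrm{supp}\,\nu$.
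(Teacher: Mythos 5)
Your overall strategy coincides with the paper's (subordination $\omega_n$, the inverse $H_n=\omega_n^{-1}$, local inversion where the limiting density is positive, separate tail estimates), but there is a genuine gap exactly where you flag it yourself: the set where the density of $\nu$ vanishes or is merely small. Your Rouch\'e argument needs $F_\nu^{-1}=H_\nu$ to be a local biholomorphism at $w_0=F_\nu(x_0)$, which is guaranteed (Lemma 2.6 of the paper) only when $\Im F_\nu(x_0)>0$; at real $w_0$ the continuation may fail ($w_0$ can lie in $\mathrm{supp}\,\sigma$) or degenerate, the ``square-root behaviour at edges'' is not a general feature of $\boxplus$-infinitely divisible laws, and the zero set of the density inside $\mathrm{supp}\,\nu$ need not consist of edges at all. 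Worse, even where the density is positive but small, a fixed-radius local inversion cannot give bounds uniform in $n$. The paper's missing idea is to observe that $\omega_n=F_{\rho_n}$ for a measure $\rho_n\in\mathcal{ID}(\boxplus)$ whose Voiculescu transform is $(k_n-1)(z-F_{\mu_n}(z))$, to prove via Boolean limit theorems that $\rho_n\to\nu$ weakly (Proposition 3.2), hence that the free generating measures satisfy $\sigma_n\to\sigma$; then the boundary curve $F_{\rho_n}(\mathbb{R})$ is the graph of the function $u_{\rho_n}$ determined by the integral equation (2.3), and a compactness argument with $\sigma_n\to\sigma$ yields the uniform bound $\sup_{J'}u_{\rho_n}\le 2\delta$ on the set $J'$ where $u_\nu\le\delta$ (estimate (4.3)). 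This single estimate disposes of all small-density points at once, with no case analysis, and is the step your sketch does not supply.

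Two further points need repair. First, to convert $\Im F_{\rho_n}\to 0$ into convergence of densities, and to run the tail and $L^p$ estimates, one needs $|F_{\rho_n}(t)|$ bounded below away from $U$ and $|F_{\rho_n}(t)|\gtrsim|t|$ for large $|t|$ uniformly in $n$; tightness of $\{\nu_n\}$ gives neither (small mass does not bound a density). The paper gets both from the bi-Lipschitz inequality $|F_{\rho_n}(z_1)-F_{\rho_n}(z_2)|\ge\frac12|z_1-z_2|$, valid precisely because $\rho_n$ is $\boxplus$-infinitely divisible, applied once at $z_2=i$ for the tails and once at the point $a_n$ with $\Re F_{\rho_n}(a_n)=0$ (shown to satisfy $|a_n|<b$ using the $2$-Lipschitz property of $H_{\rho_n}$) for the set $I'\setminus U$. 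Second, your claim that the possible atom $a_n$ of $\nu_n$ is ``pinned'' to $t_\nu$ by weak convergence is unjustified as stated: atoms and their locations are not continuous under weak convergence, and an atom of $\mu_n^{\boxplus k_n}$ can have mass tending to $0$. The paper avoids locating atoms directly and instead bounds $|F_{\rho_n}|$ from below off $U$, which simultaneously excludes atoms there and controls the density.
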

Note that
$U$ can be taken to be empty if $D_{\nu}=\varnothing$. 

In Proposition 5.1, we provide the necessary and sufficient conditions for the existence of the singularity $t_{\nu}$, as well as a formula to compute it when this point exists. These conditions and the formula are further used to investigate the quality of convergence to freely stable and free compound Poisson densities.

To prove this result, we first approximate $\nu_n$ by a closely related 
$\boxplus$-infinitely divisible measure $\rho_n$ and we use the 
fact that $\g{\rho_n}$ is a  conformal map. Related considerations appear in the work of Chistyakov and G{\"o}tze \cite{CG2013b}.

The remainder of this paper is organized as follows. In Section 2, we review some relevant preliminaries on
free convolution and freely infinitely divisible distributions. Section 3 is devoted to describing 
the subordination function appearing in free convolution powers. Section 4 contains the proof of
our main result, and some examples and applications are given in Section 5.

\section{Free convolution and freely infinitely divisible distributions}
Let $\mathbb{C}^+=\{z\in\mathbb{C}:\Im z>0\}$ be the complex upper half-plane, and let $\nu$ be a probability measure on $\mathbb{R}$. Recall that the Cauchy transform $G_{\nu}(z)$ of $\nu$ is defined by \eqref{eq:1.1} for $z \in \mathbb{C}^{+}$. The measure $\nu$ can be recovered as the weak limit of the measures
\[
  d\nu_y(x)=-\frac{1}{\pi}\Im \g{\nu}(x+iy)\,dx,\quad x\in\mathbb{R}, \quad y>0,
\]
as $y\rightarrow 0$, and the atoms of $\nu$ can be calculated as follows:
\begin{equation}\label{eq:2.1}
\lim_{\mathop{y\to 0}}iyG_\nu(\alpha+iy)=\nu(\{\alpha\}),\quad\alpha\in\mathbb{R}.
\end{equation} 

The reciprocal $\f{\nu}=1/\g{\nu}$ is an analytic self-map of $\mathbb{C}^{+}$ and plays a role in the calculation of free convolution. More precisely, for any $\eta > 0$ there exists a positive constant $M=M(\eta,\nu)$ such that the function $\f{\nu}$ has an analytic right inverse $\f{\nu}^{-1}$ (relative to the composition) defined in the truncated cone \[\Gamma_{\eta,M}=\{x+iy:y>M,\text{and}\, |x|<\eta y \}.\] The \emph{Voiculescu transform} $\varphi_{\nu}$ of $\nu$ is then defined as $\varphi_{\nu}(z)=\f{\nu}^{-1}(z)-z$, and for any probability law $\mu$ on $\mathbb{R}$, we have \[\varphi_{\mu\boxplus \nu}(z)=\varphi_{\mu}(z)+\varphi_{\nu}(z)\] for all $z$ in a region of the form $\Gamma_{\eta, M}$ where all three transforms are defined (see \cite{BV1993} for the proof). In this sense, the Voiculescu transform linearizes the free convolution $\boxplus$. 

The set of all finite Borel measures on $\mathbb{R}$ is equipped with the topology of weak convergence from duality with continuous bounded functions. Denoting by $\mathcal{M}$ the class of all Borel probability measures on $\mathbb{R}$, we can translate weak convergence of measures in $\mathcal{M}$ into convergence properties of the corresponding Voiculescu transforms.
We recall the following result from \cite{BPata1999}.
\begin{prop}\label{prop:2.1}
Let $\mu,\mu_{1},\mu_2,\dots$ be measures in $\mathcal{M}$. Then the sequence $\mu_n$ converges weakly to the law $\mu$ if and only if there exist $\eta, M>0$ such that the function $\varphi_{\mu_n}$ are defined on $\Gamma_{\eta,M}$
   for every $n$, $\lim_{n\rightarrow \infty}\varphi_{\mu_n}(iy)=\varphi_{\mu}(iy)$ for every $y>M$, and
   $\varphi_{\mu_n}(iy)=o(y)$ uniformly in $n$ as $y\rightarrow\infty$. 
\end{prop}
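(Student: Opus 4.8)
The plan is to prove the two implications by passing back and forth along the chain of transforms $\mu\leftrightarrow\g{\mu}\leftrightarrow\f{\mu}=1/\g{\mu}\leftrightarrow\f{\mu}^{-1}\leftrightarrow\p{\mu}=\f{\mu}^{-1}-\mathrm{id}$, the recurring technical point being the \emph{uniform} control of these transforms near infinity, which is exactly what tightness of $\{\mu_n\}$ provides. I would use freely the following facts. For a sequence in $\mathcal{M}$, weak convergence $\mu_n\to\mu$ holds iff $\{\mu_n\}$ is tight and $\g{\mu_n}\to\g{\mu}$ uniformly on compact subsets of $\mathbb{C}^+$; vague convergence $\mu_n\to\sigma$ to a (possibly sub-probability) finite measure is equivalent to pointwise convergence $\g{\mu_n}\to\g{\sigma}$ on $\mathbb{C}^+$, and then $iy\,\g{\sigma}(iy)\to\sigma(\mathbb{R})$ as $y\to+\infty$; and, by the cone estimates of \cite{BV1993}, if $N>0$ and $\mu(\{|t|\le N\})\ge\tfrac12$ then $\f{\mu}^{-1}$ (hence $\p{\mu}$) is defined on a cone $\Gamma_{\eta,M}$ with $M$ depending only on $\eta$ and $N$, and there $\p{\mu}(z)=o(|z|)$ with a rate depending only on $\eta$, $N$, and the tail function $R\mapsto\mu(\{|t|>R\})$. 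The analytic heart behind the last point is the identity $1-\Re\!\big(iy\,\g{\mu}(iy)\big)=\int_{\mathbb{R}}t^2(t^2+y^2)^{-1}\,d\mu(t)$, which turns tail decay of $\mu$ into closeness of $\f{\mu}(iy)/(iy)$ to $1$.

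For the forward implication, assume $\mu_n\to\mu$ weakly; then $\{\mu_n\}$ is tight, so there is $N$ with $\mu_n(\{|t|\le N\})\ge\tfrac12$ for all $n$, and the cone estimates above furnish a single cone $\Gamma_{\eta,M}$ on which every $\p{\mu_n}$ is defined and on which $\p{\mu_n}(iy)=o(y)$ as $y\to+\infty$ uniformly in $n$. Since $\g{\mu}$ is zero-free on $\mathbb{C}^+$, weak convergence gives $\f{\mu_n}=1/\g{\mu_n}\to\f{\mu}$ uniformly on compact subsets of $\mathbb{C}^+$. The map $\f{\mu}$ restricts to a biholomorphism of $\f{\mu}^{-1}(\Gamma_{\eta,M})$ onto $\Gamma_{\eta,M}$, and a Hurwitz/Rouch\'e argument then upgrades $\f{\mu_n}\to\f{\mu}$ to $\f{\mu_n}^{-1}\to\f{\mu}^{-1}$ uniformly on compact subsets of a (possibly smaller) cone $\Gamma_{\eta,M'}$. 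Subtracting the identity yields $\p{\mu_n}(iy)\to\p{\mu}(iy)$ for every $y>M'$, completing this direction.

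For the reverse implication, suppose the three conditions hold; the only new thing to extract is tightness of $\{\mu_n\}$, after which the previous paragraph runs in reverse. Suppose $\{\mu_n\}$ is not tight; pass to a subsequence $\mu_{n_j}$ converging vaguely to a finite measure $\sigma$ with $c:=\sigma(\mathbb{R})<1$. Then $\g{\mu_{n_j}}\to\g{\sigma}$, hence $\f{\mu_{n_j}}\to\f{\sigma}$, uniformly on compact subsets of $\mathbb{C}^+$, with $\f{\sigma}(iy)/(iy)\to 1/c>1$. Since all $\f{\mu_{n_j}}^{-1}$ are defined on the given cone $\Gamma_{\eta,M}$, inverting as above gives, for each fixed $y>M$, $\p{\mu_{n_j}}(iy)=\f{\mu_{n_j}}^{-1}(iy)-iy\to\f{\sigma}^{-1}(iy)-iy$ as $j\to\infty$, and $|\f{\sigma}^{-1}(iy)-iy|\sim(1-c)\,y$ as $y\to+\infty$; choosing first $y$ large and then $j$ large contradicts $\p{\mu_n}(iy)=o(y)$ uniformly in $n$. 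Hence $\{\mu_n\}$ is tight. Consequently $\{\g{\mu_n}\}$ is a normal family on $\mathbb{C}^+$, and by tightness every locally uniform subsequential limit is $\g{\sigma}$ for a \emph{probability} measure $\sigma$, whose Voiculescu transform is then defined on some cone; inverting $\f{\mu_{n_k}}\to\f{\sigma}$ as before gives $\p{\mu_{n_k}}(iy)\to\p{\sigma}(iy)$ for large $y$, which together with the hypothesis $\p{\mu_{n_k}}(iy)\to\p{\mu}(iy)$ forces $\p{\sigma}=\p{\mu}$ on a ray, hence $\f{\sigma}^{-1}=\f{\mu}^{-1}$, hence $\g{\sigma}=\g{\mu}$, hence $\sigma=\mu$ by Stieltjes inversion. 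Every subsequential limit of $\{\g{\mu_n}\}$ being $\g{\mu}$, we get $\g{\mu_n}\to\g{\mu}$ locally uniformly, and with tightness this gives $\mu_n\to\mu$ weakly.

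I expect the main obstacle to be precisely these uniformity statements: producing one truncated cone $\Gamma_{\eta,M}$ valid for the whole family together with the uniform $o(y)$ estimate in the forward direction, and, conversely, squeezing tightness out of the uniform $o(y)$ hypothesis in the reverse direction. Pointwise convergence $\p{\mu_n}(iy)\to\p{\mu}(iy)$ alone says nothing about escape of mass to infinity; it is the uniformity in $n$ that plays the role of a Prokhorov-type compactness condition, and the core of the proof is the quantitative link between it and control of the tails of the $\mu_n$, for which one leans on the cone estimates of \cite{BV1993} and the identities for $\g{\mu}$ along the imaginary axis.
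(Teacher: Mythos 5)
The paper offers no proof of this proposition --- it is quoted from \cite{BPata1999}, and the result originates as Proposition 5.7 of \cite{BV1993} --- so there is no in-paper argument to compare against. Your overall architecture is the standard one from those sources: forward direction via tightness and the uniform cone estimates, reverse direction by first extracting tightness from the uniform $o(y)$ hypothesis and then pinning down every weak subsequential limit through the identity theorem for $\varphi$. The forward direction and the final identification-of-limits step are fine modulo the \cite{BV1993} cone machinery you invoke.

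There is, however, one step in the tightness argument that fails as written. You claim that ``inverting as above'' gives $F_{\mu_{n_j}}^{-1}(iy)\to F_{\sigma}^{-1}(iy)$ for fixed $y>M$. This conflates two different inverse branches. The canonical right inverse $F_{\mu_{n_j}}^{-1}$ is normalized by $F_{\mu_{n_j}}^{-1}(w)/w\to 1$ at $i\infty$, so your uniform hypothesis forces $|F_{\mu_{n_j}}^{-1}(iy)-iy|\le\varepsilon y$ for $y\ge y_{\varepsilon}$, while the canonical inverse of $F_{\sigma}$ is normalized by $F_{\sigma}^{-1}(w)/w\to c$, placing $F_{\sigma}^{-1}(iy)$ at distance roughly $(1-c)y$ from $iy$. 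A Rouch\'e argument produces \emph{some} preimage of $iy$ under $F_{\mu_{n_j}}$ near $F_{\sigma}^{-1}(iy)$, but it cannot be the canonical one, which is pinned near $iy$; so the asserted convergence is not a consequence of your hypotheses but is actually incompatible with them. The contradiction you want is still available, but you must run it on the canonical branch directly: put $z_j=F_{\mu_{n_j}}^{-1}(iy)$, so that $F_{\mu_{n_j}}(z_j)=iy$ and $\Im z_j\ge(1-\varepsilon)y$; extract a limit $z_*$ in the resulting compact subset of $\mathbb{C}^{+}$, get $F_{\sigma}(z_*)=iy$ by locally uniform convergence, and invoke the Cauchy--Schwarz bound $\Im F_{\sigma}(z)\ge\Im z/\sigma(\mathbb{R})$ to conclude $c\ge 1-\varepsilon$, which is absurd once $\varepsilon<1-c$. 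You should also treat separately the case $\sigma=0$, where $F_{\sigma}$ and $F_{\sigma}^{-1}$ do not exist: there $G_{\mu_{n_j}}\to 0$ locally uniformly, so $|F_{\mu_{n_j}}(z_j)|\to\infty$ on the compact set where $z_j$ lives, contradicting $F_{\mu_{n_j}}(z_j)=iy$ (this case needs only the pointwise convergence hypothesis). With these repairs your proof is complete and coincides in substance with the original.
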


A measure $\nu \in \mathcal{M}$ is said to be \emph{$\boxplus$-infinitely divisible} if for every positive
integer $n$, there exists a measure $\nu_{n}\in \mathcal{M}$ such that
\begin{equation}\nonumber
 \nu=\underbrace{\nu_{n}\boxplus\nu_{n}\boxplus\cdots\boxplus\nu_{n}}_{n
 \,\,\text{times}}.
\end{equation}
We denote by $\mathcal{ID}(\boxplus)$ the set of all
$\boxplus$-infinitely divisible measures in $\mathcal{M}$.
It was shown in \cite{BV1993} that $\nu \in \mathcal{ID}(\boxplus)$
if and only if the function $\varphi_{\nu}$ extends analytically to a map from $\mathbb{C}^+$
into $\mathbb{C}^-\cup\mathbb{R}$, in which case there exist a real constant $\gamma$
and a finite Borel measure $\sigma$ on $\mathbb{R}$ such that $\varphi_{\nu}$
has the following \emph{free L\'{e}vy-Khintchine representation}: \[\varphi_{\nu}(z)=\gamma+\int_{\mathbb{R}}\frac{1+tz}{z-t}\,d\sigma(t).\]
The pair $(\gamma, \sigma)$ is uniquely determined.
Conversely, given such a pair $(\gamma, \sigma)$, there exists a unique probability law 
$\nu=\nu_{\boxplus}^{\gamma,\sigma} \in \mathcal{ID}(\boxplus)$
satisfying the above integral formula. We shall call
the pair $(\gamma,\sigma)$ the \emph{free generating pair} for $\nu_{\boxplus}^{\gamma,\sigma}$. Weak convergence of $\boxplus$-infinitely divisible laws can be characterized in terms of their free generating pairs; namely, $\nu_{\boxplus}^{\gamma_n,\sigma_n} \rightarrow \nu_{\boxplus}^{\gamma,\sigma}$ weakly if and only if 
$\gamma_n\rightarrow \gamma$ and $\sigma_n\rightarrow\sigma$ weakly (\cite[Theorem 5.13]{BNT2006}).

We review some useful results related to the $F$-transforms of freely infinitely divisible distributions,
which were proved in \cite{BB2005, Huang}, and are closely related to Biane's work \cite{Biane1997}. 
Given $\nu=\nu^{\gamma,\sigma}_{\boxplus}$ in $\mathcal{ID}(\boxplus)$, the function $\f{\nu}$ is a conformal map, and its inverse is the function \[H_{\nu}(z)=z+\varphi_{\nu}(z)=z+\gamma+\int_{\mathbb{R}}\frac{1+tz}{z-t}\,d\sigma(t), \quad z \in \mathbb{C}^{+}.\] This means that $H_{\nu}(F_{\nu}(z))=z$ for all $z\in\mathbb{C}^+$.
Note that $H_{\nu}:\mathbb{C}^+\rightarrow\mathbb{C}$ is an analytic function satisfying
$\Im H_{\nu}(z)\leq \Im z$ for all $z\in\mathbb{C}^+$.
The following result is a consequence of \cite[Theorem 4.6]{BB2005}.
\begin{prop}\label{prop:2.2}
The function $F_{\nu}$ has a one-to-one continuous extension to $\mathbb{C}^+\cup \mathbb{R}$, and 
it satisfies 
  \begin{equation}\label{eq:2.2}
    |F_{\nu}(z_1)-F_{\nu}(z_2)|\geq \frac{1}{2}|z_1-z_2|,\,\, z_1,z_2 \in \mathbb{C}^+\cup \mathbb{R}.
  \end{equation}
If $\alpha \in \mathbb{R}$ is a point such that $\Im F_{\nu}(\alpha)>0$, then 
$F_{\nu}$ can be continued
analytically to a neighborhood of $\alpha$.
\end{prop}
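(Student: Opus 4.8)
The plan is to work with the inverse $H_{\nu}$ of the conformal map $F_{\nu}$ and to exploit the free L\'evy--Khintchine form of $\varphi_{\nu}$. Write $\Omega_{\nu}=F_{\nu}(\mathbb{C}^{+})$, so that $H_{\nu}\circ F_{\nu}=\mathrm{id}$ on $\mathbb{C}^{+}$ and $H_{\nu}(\Omega_{\nu})=\mathbb{C}^{+}$; in particular $\Im H_{\nu}(w)>0$ for every $w\in\Omega_{\nu}$. Put $J(w)=\int_{\mathbb{R}}(1+t^{2})|w-t|^{-2}\,d\sigma(t)$ for $w\in\mathbb{C}^{+}$. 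A short computation from $\varphi_{\nu}(z)=\gamma+\int_{\mathbb{R}}\frac{1+tz}{z-t}\,d\sigma(t)$ gives, on the one hand, $\Im H_{\nu}(w)=(\Im w)(1-J(w))$, so that $J(w)<1$ for all $w\in\Omega_{\nu}$; and, on the other hand, the identity
\begin{equation*}
H_{\nu}(w_{1})-H_{\nu}(w_{2})=(w_{1}-w_{2})\Bigl(1-\int_{\mathbb{R}}\frac{1+t^{2}}{(w_{1}-t)(w_{2}-t)}\,d\sigma(t)\Bigr),\qquad w_{1},w_{2}\in\mathbb{C}^{+}.
\end{equation*}
By Cauchy--Schwarz the integral is bounded in modulus by $\sqrt{J(w_{1})J(w_{2})}$, which is $<1$ once $w_{1},w_{2}\in\Omega_{\nu}$; hence $|H_{\nu}(w_{1})-H_{\nu}(w_{2})|\le 2|w_{1}-w_{2}|$ on $\Omega_{\nu}$. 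Substituting $w_{j}=F_{\nu}(z_{j})$, so that $H_{\nu}(w_{j})=z_{j}$, yields \eqref{eq:2.2} for $z_{1},z_{2}\in\mathbb{C}^{+}$; in particular $F_{\nu}$ is injective on $\mathbb{C}^{+}$, and $F_{\nu}^{-1}=H_{\nu}|_{\Omega_{\nu}}$ is $2$-Lipschitz.

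Next I would produce the boundary extension. Since $F_{\nu}=1/G_{\nu}$ and, by the fact recalled in the Introduction, $G_{\nu}$ extends continuously to $\mathbb{C}^{+}\cup(\mathbb{R}\setminus\{t_{\nu}\})$, the function $F_{\nu}$ extends continuously there as well; the single point $t_{\nu}$ requires a brief separate discussion, after which $F_{\nu}$ extends continuously to all of $\mathbb{C}^{+}\cup\mathbb{R}$ as a map into $\widehat{\mathbb{C}}$, taking the value $\infty$ at most at $t_{\nu}$. (Alternatively, one may obtain the extension by extending the $2$-Lipschitz map $H_{\nu}|_{\Omega_{\nu}}$ to $\overline{\Omega_{\nu}}$ and inverting it, the injectivity of that extension again resting on the regularity of $G_{\nu}$ on $\mathbb{R}\setminus\{t_{\nu}\}$.) Inequality \eqref{eq:2.2} then propagates to $\mathbb{C}^{+}\cup\mathbb{R}$ by approximating real points $z_{j}$ by $z_{j}+i\varepsilon$ and using continuity of the extension; and \eqref{eq:2.2} on $\mathbb{C}^{+}\cup\mathbb{R}$ immediately forces the extended $F_{\nu}$ to be one-to-one.

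For the last assertion, let $\alpha\in\mathbb{R}$ with $\Im F_{\nu}(\alpha)>0$ and put $w_{0}=F_{\nu}(\alpha)\in\mathbb{C}^{+}$. As $w_{0}$ lies in the open set $\mathbb{C}^{+}$, $H_{\nu}$ is holomorphic near $w_{0}$, and by continuity $H_{\nu}(w_{0})=\alpha$, hence $\Im H_{\nu}(w_{0})=0$. The function $-\varphi_{\nu}=H_{\nu}-\mathrm{id}$ is a holomorphic self-map of $\mathbb{C}^{+}$ with $\Im(-\varphi_{\nu}(w))=\Im w-\Im H_{\nu}(w)$, so the Schwarz--Pick inequality gives
\begin{equation*}
|\varphi_{\nu}'(w_{0})|\le\frac{\Im w_{0}-\Im H_{\nu}(w_{0})}{\Im w_{0}}=1,
\end{equation*}
with equality only if $-\varphi_{\nu}$ is a conformal automorphism of $\mathbb{C}^{+}$, that is, only if $\sigma$ is a point mass and $\nu$ is an affine image of a semicircular law; in that exceptional case one checks directly that $H_{\nu}'$ vanishes only on $\mathbb{R}$, never on $\mathbb{C}^{+}$. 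Thus $H_{\nu}'(w_{0})=1+\varphi_{\nu}'(w_{0})\ne 0$ in all cases, so $H_{\nu}$ has a holomorphic local inverse $\Phi$ near $\alpha$ with $\Phi(\alpha)=w_{0}$; since $\Phi\circ H_{\nu}\circ F_{\nu}=F_{\nu}$ on a half-disc neighborhood of $\alpha$ in $\mathbb{C}^{+}$, the map $\Phi$ agrees with $F_{\nu}$ there and furnishes the desired analytic continuation.

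I expect the only real difficulty to be the boundary analysis in the second step: the estimate of the first step makes $F_{\nu}^{-1}$, not $F_{\nu}$, Lipschitz, so $F_{\nu}$ need not be uniformly continuous up to $\mathbb{R}$, and one must genuinely use the a priori continuity of $G_{\nu}$ on $\mathbb{R}\setminus\{t_{\nu}\}$ (equivalently \cite[Theorem 4.6]{BB2005}) together with a careful treatment of the unbounded behavior at $t_{\nu}$. The borderline case of the Schwarz--Pick estimate in the last step is a minor, but genuine, point to dispose of.
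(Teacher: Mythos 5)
The paper offers no proof of this proposition: it is quoted as a consequence of \cite[Theorem 4.6]{BB2005}. Measured against that, your first and third steps are correct and self-contained. The identity $H_{\nu}(w_1)-H_{\nu}(w_2)=(w_1-w_2)\bigl(1-\int_{\mathbb{R}}\frac{1+t^2}{(w_1-t)(w_2-t)}\,d\sigma(t)\bigr)$, the Cauchy--Schwarz bound by $\sqrt{J(w_1)J(w_2)}$, and the observation that $J<1$ on $\Omega_{\nu}$ because $\Im H_{\nu}=(\Im w)(1-J(w))>0$ there, correctly give both the $2$-Lipschitz bound for $H_{\nu}$ and \eqref{eq:2.2} on $\mathbb{C}^+$ (and, as a bonus, injectivity of $H_{\nu}$ on $\Omega_{\nu}$). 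The local continuation argument is also sound: at $w_0=F_{\nu}(\alpha)\in\mathbb{C}^+$ one has $J(w_0)=1$, and your Schwarz--Pick treatment, including the rigidity case where $\sigma$ is a point mass (there $H_{\nu}'(w)=1-c(1+t_0^2)/(w-t_0)^2$ vanishes only at real points), correctly yields $H_{\nu}'(w_0)\neq 0$; this is essentially the same computation the paper itself performs in Lemma \ref{lemma:2.6} by a direct modulus estimate. (Note the sign slip: $-\varphi_{\nu}=\mathrm{id}-H_{\nu}$, not $H_{\nu}-\mathrm{id}$; your displayed imaginary part shows you meant the former.)

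The genuine gap is your second step, and it is not a peripheral one: the continuous one-to-one extension of $F_{\nu}$ to $\mathbb{C}^+\cup\mathbb{R}$ is precisely the content of \cite[Theorem 4.6]{BB2005}. Deriving it from the continuity of $G_{\nu}$ on $\mathbb{R}\setminus\{t_{\nu}\}$ is circular: in this paper that continuity is a \emph{consequence} of Proposition \ref{prop:2.2} (see the Remark after Theorem \ref{thm:4.1}), and in the literature it is proved via the very result you are trying to establish. Your fallback --- extend the $2$-Lipschitz map $H_{\nu}$ to $\overline{\Omega_{\nu}}$ and invert --- is the right skeleton, but the two facts it requires, namely that the extended $H_{\nu}$ is injective on $\partial\Omega_{\nu}$ and maps it \emph{onto} $\mathbb{R}$, are exactly where the work lies (one needs the description of $\partial\Omega_{\nu}$ as the graph of $u_{\nu}$ and the strict monotonicity of $x\mapsto H_{\nu}(x+iu_{\nu}(x))$, i.e.\ the content of Propositions \ref{prop:2.3} and \ref{prop:2.4}), and neither is supplied; the equality analysis in Cauchy--Schwarz on the boundary, where $J$ may equal $1$, must also be carried out. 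Finally, your treatment of the exceptional point has the roles of $0$ and $\infty$ reversed: $t_{\nu}$ is the point where $G_{\nu}$ blows up and $F_{\nu}$ \emph{vanishes} (the potential atom); the extended $F_{\nu}$ is finite everywhere on $\mathbb{R}$ and never takes the value $\infty$, so the "brief separate discussion" as described does not address the right issue.
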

The inequality (\ref{eq:2.2}) implies that \[|H_{\nu}(z_1)-H_{\nu}(z_2)|\leq 2 |z_1-z_2|,\,\, 
   z_1,z_2\in \Omega_{\nu},\] where $\Omega_{\nu}=\f{\nu}(\mathbb{C}^+)$. The function $H_{\nu}$ has 
a one-to-one continuous extension to the closure $\overline{\Omega_{\nu}}$. This extension is still denoted $H_{\nu}$. Thus, we have the following inversion relationships: \[H_{\nu}\left(F_{\nu}(z)\right)=z, \quad z \in \mathbb{C}^{+}\cup \mathbb{R},\] and \[F_{\nu}\left(H_{\nu}(z)\right)=z, \quad z \in \overline{\Omega_{\nu}}.\]

We describe now the boundary set $\partial\Omega_{\nu}$. 
Given $x \in \mathbb{R}$ and $y>0$,
observe that 
\begin{equation}\nonumber
  \Im H_{\nu}(x+iy)=y\left(1-\int_{\mathbb{R}}\frac{1+t^2}{(t-x)^2+y^2}\,d\sigma (t) \right).
\end{equation} It follows that 
\[
 \Im H_{\nu}(x+iy)=0
\]
if and only if
\begin{equation}\label{eq:2.3}
\int_{\mathbb{R}}\frac{1+t^2}{(t-x)^2+y^2}\,d\sigma (t) =1.
\end{equation}
On the other hand, note that for any $x\in\mathbb{R}$,
the positive function
\begin{equation}\nonumber
  y\mapsto \int_{\mathbb{R}}\frac{1+t^2}{(t-x)^2+y^2}\,d\sigma (t) 
\end{equation} 
is continuous and strictly decreasing in $y$, provided that $\sigma \neq 0$; the case $\sigma =0$ corresponds to a measure $\nu$ which is a point mass. 
Thus, for any $x\in\mathbb{R}$, there exists at most one value $y>0$ satisfying (\ref{eq:2.3}).
It is natural to introduce 
two sets\[A_{\nu}=\{x\in\mathbb{R}:g(x)>1 \}\] and \[B_{\nu}=\mathbb{R} \backslash A_{\nu}= \{x\in\mathbb{R}:g(x)\leq1 \},\]
where the function 
\[
g(x)=\int_{\mathbb{R}}\frac{1+t^2}{(t-x)^2}\,d\sigma (t)=\sup_{y>0}\int_{\mathbb{R}}\frac{1+t^2}{(t-x)^2+y^2}\,d\sigma (t), \quad x \in \mathbb{R},
\] 
is a lower semicontinuous function of $x$, so that $A_{\nu}$ is an open set. 
For $x\in A_{\nu}$, define $u_{\nu}(x)$ to be the unique $y$ in $(0,\infty)$
satisfying (\ref{eq:2.3});
for $x\in B_{\nu}$, set $u_{\nu}(x)=0$.
\begin{prop}\label{prop:2.3}\cite{Huang}
The function $\f{\nu}$ maps $\mathbb{R}$ bicontinuously to the graph 
$\gamma_{\nu}$ of the function $u_{\nu}$, that is,
  \begin{equation}\nonumber
    F_{\nu}(\mathbb{R})=\gamma_{\nu}=\{x+iu_{\nu}(x):x\in\mathbb{R} \}.
  \end{equation}  
In particular, the function $u_{\nu}$
is continuous on $\mathbb{R}$.
\end{prop}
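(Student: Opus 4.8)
The strategy is to identify $F_\nu(\mathbb{R})$ with the topological boundary $\partial\Omega_\nu$ of $\Omega_\nu$ in $\mathbb{C}$ and then to compute $\partial\Omega_\nu$ from the formula for $\Im H_\nu$ recalled above. The first step is that $F_\nu$ and $H_\nu$ are mutually inverse homeomorphisms between $\mathbb{C}^+\cup\mathbb{R}$ and $\overline{\Omega_\nu}$: the map $F_\nu$ is a homeomorphism of $\mathbb{C}^+$ onto $\Omega_\nu$ because it is conformal, and using Proposition \ref{prop:2.2}, the continuity of $H_\nu$ on $\overline{\Omega_\nu}$, and the relation $F_\nu(H_\nu(z))=z$ on $\overline{\Omega_\nu}$ one checks that the continuous extension of $F_\nu$ carries $\mathbb{C}^+\cup\mathbb{R}$ bijectively onto $\overline{\Omega_\nu}$ with continuous inverse $H_\nu$. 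Since $\Omega_\nu=F_\nu(\mathbb{C}^+)$ is open in $\mathbb{C}$, it follows that $F_\nu(\mathbb{R})=\overline{\Omega_\nu}\setminus\Omega_\nu=\partial\Omega_\nu$ and that $F_\nu|_{\mathbb{R}}$ is a homeomorphism onto $\partial\Omega_\nu$.

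The main step is to prove that $\Omega_\nu=D:=\{x+iy\in\mathbb{C}^+:\Im H_\nu(x+iy)>0\}$; by the monotonicity in $y$ of the integral in \eqref{eq:2.3} together with the definition of $u_\nu$, the set $D$ coincides with $\{x+iy:y>u_\nu(x)\}$. The inclusion $\Omega_\nu\subseteq D$ is immediate, since $F_\nu$ is a self-map of $\mathbb{C}^+$ and $\Im H_\nu(F_\nu(z))=\Im z>0$. For the reverse inclusion I would first verify that $D$ is open and connected: for each fixed $x$ the slice $\{y>0:x+iy\in D\}$ is the ray $(u_\nu(x),\infty)$, which is nonempty because the integral in \eqref{eq:2.3} tends to $0$ as $y\to\infty$, and a short argument shows that the set of those $x$ whose ray lies in a prescribed clopen piece of $D$ is open, so $D$ is connected. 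Then $z\mapsto F_\nu(H_\nu(z))$ is analytic on $D$ and equals the identity map on the nonempty open subset $\Omega_\nu\subseteq D$, hence on all of $D$ by the identity theorem; consequently every $w\in D$ satisfies $w=F_\nu(H_\nu(w))\in F_\nu(\mathbb{C}^+)=\Omega_\nu$, giving $D\subseteq\Omega_\nu$.

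It remains to compute $\partial\Omega_\nu$ and deduce the continuity of $u_\nu$. One first checks that $u_\nu$ is lower semicontinuous: if $u_\nu(x_0)=y_0>0$ and $0<\varepsilon<y_0$, the integral in \eqref{eq:2.3} evaluated at $(x_0,y_0-\varepsilon)$ exceeds $1$, hence it still exceeds $1$ at $(x,y_0-\varepsilon)$ for $x$ near $x_0$, which forces $u_\nu(x)>y_0-\varepsilon$ there. Given this and $\Omega_\nu=\{x+iy:y>u_\nu(x)\}$: each point $x_0+iu_\nu(x_0)$ is a limit of points $x_0+iy$ with $y\downarrow u_\nu(x_0)$, so it lies in $\overline{\Omega_\nu}\setminus\Omega_\nu=\partial\Omega_\nu$; conversely, if $a+ib\in\partial\Omega_\nu$ then $b\le u_\nu(a)$ since $a+ib\notin\Omega_\nu$, while $b\ge u_\nu(a)$ by lower semicontinuity applied to a sequence in $\Omega_\nu$ tending to $a+ib$, so $b=u_\nu(a)$. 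Hence $\partial\Omega_\nu=\gamma_\nu$, which together with the first step gives $F_\nu(\mathbb{R})=\gamma_\nu$ and that $F_\nu|_{\mathbb{R}}$ is a homeomorphism onto $\gamma_\nu$. Finally, composing this homeomorphism with the continuous bijection $\gamma_\nu\to\mathbb{R}$, $x+iu_\nu(x)\mapsto x$, yields a continuous bijection of $\mathbb{R}$ onto itself, which is automatically a homeomorphism; therefore $x\mapsto x+iu_\nu(x)$ is continuous and so is $u_\nu$.

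The only genuine obstacle is the middle step, the identification $\Omega_\nu=D$, which needs both the connectedness of $D$ and the identity-theorem argument that propagates the relation $F_\nu\circ H_\nu=\mathrm{id}$ from $\Omega_\nu$ to all of $D$; the remaining steps are soft point-set topology once the explicit description of $\Omega_\nu$ is available.
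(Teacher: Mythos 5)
Your argument is correct, but note that the paper itself gives no proof of this proposition: it is quoted from the reference \cite{Huang} (and is closely related to Biane's analysis in \cite{Biane1997}), so there is no in-paper proof to compare against. What you have written is a sound, essentially self-contained reconstruction of the standard argument from the facts the paper does record before the statement, namely the Lipschitz-type extension results of Proposition \ref{prop:2.2}, the two inversion relations $H_\nu\circ F_\nu=\mathrm{id}$ on $\mathbb{C}^+\cup\mathbb{R}$ and $F_\nu\circ H_\nu=\mathrm{id}$ on $\overline{\Omega_\nu}$, and the formula for $\Im H_\nu(x+iy)$. The three steps (identifying $F_\nu(\mathbb{R})$ with $\partial\Omega_\nu$ via the mutually inverse homeomorphisms; proving $\Omega_\nu=\{z\in\mathbb{C}^+:\Im H_\nu(z)>0\}$ by connectedness plus the identity theorem; computing $\partial\Omega_\nu=\gamma_\nu$ from semicontinuity of $u_\nu$) all check out, and you correctly flag the middle step as the only nontrivial one. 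Two small remarks. First, the openness of $D=\{y>u_\nu(x)\}$ already forces $u_\nu$ to be \emph{upper} semicontinuous; combined with the lower semicontinuity you prove, this gives the continuity of $u_\nu$ directly, and it also makes the connectedness of $D$ immediate (join two points by going up, across at a height exceeding $\max_{[x_1,x_2]}u_\nu$, which exists by upper semicontinuity, and back down), which is a bit cleaner than the clopen-piece argument. Second, the strict monotonicity of $y\mapsto\int_{\mathbb{R}}(1+t^2)\bigl((t-x)^2+y^2\bigr)^{-1}\,d\sigma(t)$ that your slice description relies on requires $\sigma\neq 0$; the case $\sigma=0$ is the point mass $\nu=\delta_\gamma$, for which $u_\nu\equiv 0$ and the statement is trivial, so this should just be set aside explicitly as the paper does.
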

We note for further reference that the set $A_{\nu}$ is merely the collection of all $x\in \mathbb{R}$ such that $u_{\nu}(x)>0$. Moreover, for any $t \in \mathbb{R}$, we have $\Im F_{\nu}(t)>0$ if and only if $\Re F_{\nu}(t)\in A_{\nu}$. The graph $\gamma_{\nu}$ is precisely the boundary set $\partial \Omega_{\nu}$, and one has $\Omega_{\nu}=\{z\in \mathbb{C}^{+}:H_{\nu}(z) \in \mathbb{C}^{+}\}$.
The following result now follows easily from these facts; see also \cite{Biane1997,Huang}.
\begin{prop}\label{prop:2.4}
The function $t\mapsto \Re F_{\nu}(t)$ is a strictly increasing homeomorphism from $\mathbb{R}$ to $\mathbb{R}$.
\end{prop}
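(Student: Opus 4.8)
The plan is to exploit the two structural facts recalled just before the statement: (i) $F_\nu$ extends to a one-to-one continuous map on $\mathbb{C}^+\cup\mathbb{R}$, and (ii) the boundary curve $\gamma_\nu = F_\nu(\mathbb{R})$ is the graph $\{x+iu_\nu(x):x\in\mathbb{R}\}$ of the continuous function $u_\nu$, which is strictly positive exactly on the open set $A_\nu$. Write $p(t) = \Re F_\nu(t)$ and $q(t) = \Im F_\nu(t) = u_\nu(p(t))$. First I would record that $p$ is continuous (from (i)) and that injectivity of $F_\nu|_\mathbb{R}$ forces $p$ to be injective on $\mathbb{R}$: indeed, if $p(s)=p(t)$ for $s\ne t$, then $F_\nu(s)$ and $F_\nu(t)$ are two distinct points on the vertical line $\Re z = p(s)$, both lying on $\gamma_\nu$; but $\gamma_\nu$ is a graph over the $x$-axis, so it meets each vertical line in at most one point — contradiction. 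Hence $p$ is continuous and injective on $\mathbb{R}$, therefore strictly monotone. That it is strictly \emph{increasing} (rather than decreasing) follows from the asymptotics $F_\nu(z) = z + o(1)$ as $z\to\infty$ nontangentially (equivalently $H_\nu(z)=z+o(1)$, since $\sigma$ is finite), which forces $p(t)/t \to 1$ and in particular $p(t)\to+\infty$ as $t\to+\infty$ along $\mathbb{R}$.

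It remains to show $p$ is \emph{onto} $\mathbb{R}$. Since $p$ is continuous, strictly increasing, and $p(t)\to+\infty$ (and, by the same asymptotics, $p(t)\to-\infty$) as $t\to\pm\infty$, the intermediate value theorem gives surjectivity, and hence $p$ is a homeomorphism of $\mathbb{R}$ onto $\mathbb{R}$.

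If one prefers an argument avoiding the explicit asymptotics at infinity, here is the alternative I would present: a continuous injection $p:\mathbb{R}\to\mathbb{R}$ is automatically strictly monotone and its image is an open interval $(a,b)$ with $-\infty\le a<b\le+\infty$. Suppose $b<+\infty$. Using the inversion $F_\nu(H_\nu(w))=w$ for $w\in\overline{\Omega_\nu}$, pick real points $x_n\to b^-$ in the image of $p$; then $H_\nu(x_n+iu_\nu(x_n))$ are real numbers $t_n$ with $p(t_n)=x_n$, and the Lipschitz bound $|H_\nu(z_1)-H_\nu(z_2)|\le 2|z_1-z_2|$ together with continuity of $u_\nu$ shows $t_n$ stays in a bounded set, so a subsequence converges to some $t_\infty\in\mathbb{R}$; continuity of $p$ then gives $p(t_\infty)=b$, contradicting $b\notin(a,b)$. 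The same argument rules out $a>-\infty$, so $p$ maps $\mathbb{R}$ onto $\mathbb{R}$.

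The main obstacle is really just the surjectivity/properness step — ensuring the image of $p$ is all of $\mathbb{R}$ and not a proper subinterval — since continuity and injectivity (hence strict monotonicity) are immediate from Proposition~\ref{prop:2.2} and Proposition~\ref{prop:2.3}. Both routes above settle it: the cleanest is the behavior of $F_\nu$ at infinity, and the fallback uses the Lipschitz estimate on $H_\nu$ over $\overline{\Omega_\nu}$ to prevent the preimages from escaping to infinity.
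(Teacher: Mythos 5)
Your overall route --- continuity and injectivity of $p=\Re F_{\nu}$ from Propositions~\ref{prop:2.2} and~\ref{prop:2.3}, hence strict monotonicity, plus a surjectivity argument --- is the natural one, and the paper offers no explicit proof (it merely asserts that the statement ``follows easily from these facts''). One remark on economy before the main point: surjectivity is already contained verbatim in Proposition~\ref{prop:2.3}, which states $F_{\nu}(\mathbb{R})=\{x+iu_{\nu}(x):x\in\mathbb{R}\}$, i.e.\ that \emph{every} $x\in\mathbb{R}$ arises as $\Re F_{\nu}(t)$ for some $t$; neither of your two surjectivity arguments is needed, although the second one (properness via the Lipschitz bound on $H_{\nu}$) is correct as written.

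The genuine gap is in the step that distinguishes ``increasing'' from ``decreasing''. You infer $p(t)/t\to 1$ as $t\to+\infty$ \emph{along $\mathbb{R}$} from the nontangential asymptotics of $F_{\nu}$ at infinity. This is a non sequitur: the real axis is a tangential direction at $\infty$, and nontangential limits give no control over boundary behavior along $\mathbb{R}$ itself. (Moreover, the correct asymptotic is $F_{\nu}(z)=z+o(|z|)$, not $z+o(1)$; the $o(1)$ form would require $\sigma$ to have a finite first moment, which fails, for instance, for freely stable laws of small index.) What one can extract from the paper's estimates is that $|F_{\nu}(t)|\geq\tfrac12|t|-|F_{\nu}(i)|\to\infty$ by \eqref{eq:2.2}, and since $F_{\nu}(t)=p(t)+iu_{\nu}(p(t))$ with $u_{\nu}$ continuous, this forces $|p(t)|\to\infty$; but that only shows $p$ is an unbounded monotone bijection, not which orientation it carries. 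The clean way to settle the sign is to use that $F_{\nu}$ is a conformal homeomorphism of $\overline{\mathbb{C}^{+}}$ onto $\overline{\Omega_{\nu}}$, holomorphic in the interior, and hence preserves the boundary orientation (domain on the left): $\mathbb{R}$ oriented with $\mathbb{C}^{+}$ on the left runs left to right, and $\gamma_{\nu}$ oriented with $\Omega_{\nu}$ (the region above the graph) on the left runs in the direction of increasing $x$. With that replacement your proof is complete.
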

As shown in \cite{BV1993}, the measure $\nu$ has at most one atom. From \eqref{eq:2.1}, we see that
$\alpha$ is an atom of $\nu$ if and only if $F_{\nu}(\alpha)=0$ (which gives us the uniqueness of the atom
by Proposition \ref{prop:2.2}) and the Julia-Carath\'eodory derivative $F'_{\nu}(\alpha)$
is finite. The value of this derivative is given by
  \begin{equation}\nonumber
    F'_{\nu}(\alpha)=\frac{1}{\nu(\{ \alpha\})}.
  \end{equation}
By the Stieltjes inversion formula, the density of $\nu$ (relative to Lebesgue measure) is given by
\[\frac{d\nu}{dx}(t)=-\frac{1}{\pi}\Im \g{\nu}(t)=\frac{1}{\pi}\frac{\Im F_{\nu}(t)}{|F_{\nu}(t)|^2},\]
at points other than the possible atom $\alpha$.
(This uses the continuous extension of $\f{\nu}$ to $\mathbb{R}$.) 
\begin{lemma}
Consider a measure $\nu \in \mathcal{ID}(\boxplus)$, and denote by $s_{\nu}$ 
the density of the absolutely continuous part of $\nu$.
We have $\lim_{|t|\rightarrow\infty}s_{\nu}(t)=0$.
\end{lemma}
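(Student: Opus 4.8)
The plan is to read the conclusion off directly from the Stieltjes inversion formula recorded just above the statement: at every point $t$ other than the possible atom of $\nu$ (and other than the exceptional point $t_{\nu}$) one has
\[
s_\nu(t)=\frac{1}{\pi}\,\frac{\Im F_\nu(t)}{|F_\nu(t)|^2}.
\]
Since $|\Im w|\le |w|$ for every $w\in\mathbb{C}$, this yields the crude bound
\[
0\le s_\nu(t)\le\frac{1}{\pi\,|F_\nu(t)|},
\]
so the whole statement reduces to showing that $|F_\nu(t)|\to\infty$ as $|t|\to\infty$. (Equivalently, the first step may be phrased as $|s_\nu(t)|=\tfrac{1}{\pi}|\Im G_\nu(t)|\le\tfrac{1}{\pi}|G_\nu(t)|=\tfrac{1}{\pi}|F_\nu(t)|^{-1}$.)

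For the remaining point I would invoke the continuous extension of $F_\nu$ to $\mathbb{R}$ from Proposition \ref{prop:2.2} together with the distortion estimate \eqref{eq:2.2}. Applying \eqref{eq:2.2} with $z_1=t\in\mathbb{R}$ and $z_2=0$ gives $|F_\nu(t)-F_\nu(0)|\ge\tfrac12|t|$, hence $|F_\nu(t)|\ge\tfrac12|t|-|F_\nu(0)|$, which tends to $\infty$ as $|t|\to\infty$ (here $F_\nu(0)$ is a well-defined finite number precisely because of the continuous extension). Substituting this into the previous display gives $s_\nu(t)\le\tfrac{1}{\pi|F_\nu(t)|}\to0$, as desired.

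The only thing to verify carefully is that the density formula is legitimately in force throughout a neighborhood of infinity, and this is automatic: since $|F_\nu(t)|\to\infty$, we have $F_\nu(t)\neq0$ for all large $|t|$, so on such a set $\nu$ carries no atom and $G_\nu=1/F_\nu$ extends continuously there, whence $s_\nu$ really equals $\tfrac1\pi\,\Im F_\nu(t)/|F_\nu(t)|^2$ on a neighborhood of $\infty$. I do not anticipate a genuine obstacle: once \eqref{eq:2.2} and the Stieltjes inversion formula are available, the proof is a one-line estimate.
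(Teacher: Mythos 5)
Your argument is correct and is essentially the paper's own proof: both derive $s_\nu(t)\le \frac{1}{\pi|F_\nu(t)|}$ from the Stieltjes inversion formula and then use the Lipschitz-type lower bound \eqref{eq:2.2} to show $|F_\nu(t)|\to\infty$ as $|t|\to\infty$ (the paper takes $z_2=i$ instead of $z_2=0$, an immaterial difference). No gaps.
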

\begin{proof}
Relation \eqref{eq:2.2} implies that
   \begin{equation}\nonumber
       |F_{\nu}(t)-F_{\nu}(i)|\geq \frac{1}{2}|t-i|>\frac{1}{2}|t|, \quad t\in\mathbb{R},
   \end{equation}
so that $|F_{\nu}(t)|>|t|/3$ for $|t| > 6|F_{\nu}(i)|$.
Then the value of density $s_{\nu}$ at such $t$ can be estimated as follows: 
  \begin{equation}\label{eq:2.4}
  s_{\nu}(t)=\frac{1}{\pi}\frac{\Im F_{\nu}(t)}{|F_{\nu}(t)|^2}\leq \frac{1}{\pi}\frac{1}{|F_{\nu}(t)|}<
       \frac{1}{\pi}\frac{3}{|t|}, \quad |t| > 6|F_{\nu}(i)|.
  \end{equation}
The conclusion follows.
\end{proof} The preceding result shows that if $F_{\nu}(t_{\nu})=0$, then we must have $|t_{\nu}| \leq 6|F_{\nu}(i)|$. Moreover, for any $p>1$ and any neighborhood $U$ of the point $t_{\nu}$, the estimate \eqref{eq:2.4} implies that the $p$-th power $|s_{\nu}|^{\,p}$ is continuous and integrable over $\mathbb{R}\backslash U$. If such a zero $t_{\nu}$ does not exist, then the density $s_{\nu}$ will be a continuous function in the $L^{p}$-space for $p>1$.    

The next result follows from the proof of Theorem 4.6 in \cite{BB2005}.
Here we offer a more direct argument.
\begin{lemma}\label{lemma:2.6}
The derivative of $H_{\nu}$ is nonzero at $z=x+iu_{\nu}(x)$, for any $x\in A_{\nu}$.
\end{lemma}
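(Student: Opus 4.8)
The plan is to write down $H_\nu'$ explicitly and show that its real part is strictly positive at the indicated points.

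First I would simplify the kernel in the free L\'evy--Khintchine representation of $\varphi_\nu$ by means of the elementary identity $\frac{1+tz}{z-t} = t + \frac{1+t^2}{z-t}$, which yields
\[
  H_\nu(z) = z + \left(\gamma + \int_{\mathbb{R}} t\, d\sigma(t)\right) + \int_{\mathbb{R}} \frac{1+t^2}{z-t}\, d\sigma(t), \qquad z \in \mathbb{C}^+.
\]
Because $\sigma$ is a finite measure and the kernel $z \mapsto (1+t^2)/(z-t)$ is holomorphic and locally uniformly bounded in $t$ on compact subsets of $\mathbb{C}^+$, differentiation under the integral sign is legitimate and gives
\[
  H_\nu'(z) = 1 - \int_{\mathbb{R}} \frac{1+t^2}{(z-t)^2}\, d\sigma(t), \qquad z \in \mathbb{C}^+.
\]

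Next, fix $x \in A_\nu$ and set $z = x + iy$ with $y = u_\nu(x)$; by the definition of $A_\nu$ we have $y > 0$, so $z \in \mathbb{C}^+$. Writing $|z-t|^2 = (t-x)^2 + y^2$, a direct computation gives $\Re\,(z-t)^{-2} = \big((t-x)^2 - y^2\big)/|z-t|^4 = |z-t|^{-2} - 2y^2 |z-t|^{-4}$, and hence
\[
  \Re H_\nu'(z) = 1 - \int_{\mathbb{R}} \frac{1+t^2}{(t-x)^2+y^2}\, d\sigma(t) + 2y^2 \int_{\mathbb{R}} \frac{1+t^2}{\big((t-x)^2+y^2\big)^2}\, d\sigma(t).
\]
Since $y = u_\nu(x)$ satisfies \eqref{eq:2.3}, the first integral equals $1$, so
\[
  \Re H_\nu'(z) = 2y^2 \int_{\mathbb{R}} \frac{1+t^2}{\big((t-x)^2+y^2\big)^2}\, d\sigma(t).
\]
Now $A_\nu \neq \varnothing$ forces $\sigma \neq 0$ (otherwise $g \equiv 0$ and $A_\nu = \varnothing$), and the integrand above is strictly positive and bounded, hence $\sigma$-integrable; together with $y > 0$ this shows the right-hand side is strictly positive. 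Therefore $\Re H_\nu'(z) > 0$, and in particular $H_\nu'(z) \neq 0$, which is the assertion.

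The proof is a short, elementary calculation; the only points calling for any care are the interchange of derivative and integral and the observation that $\sigma$ cannot vanish once $A_\nu$ is nonempty, so I do not expect a genuine obstacle here.
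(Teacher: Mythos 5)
Your argument is correct, and it takes a slightly different route from the paper's. Both proofs start from the same formula $H_{\nu}'(z)=1-\int_{\mathbb{R}}\frac{1+t^2}{(z-t)^2}\,d\sigma(t)$ and both invoke the defining relation \eqref{eq:2.3} for $y=u_{\nu}(x)$, but the paper then applies the strict triangle inequality $\bigl|\int\frac{1+t^2}{(z-t)^2}\,d\sigma\bigr|<\int\frac{1+t^2}{|z-t|^2}\,d\sigma=1$ to conclude that the integral term cannot equal $1$, whereas you compute $\Re H_{\nu}'(z)$ exactly: after the cancellation forced by \eqref{eq:2.3} you are left with the identity $\Re H_{\nu}'(z)=2y^2\int_{\mathbb{R}}\frac{1+t^2}{((t-x)^2+y^2)^2}\,d\sigma(t)>0$. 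Your version buys a stronger conclusion ($\Re H_{\nu}'>0$ on the curve $\gamma_{\nu}\cap\{u_{\nu}>0\}$, not merely $H_{\nu}'\neq 0$) and avoids having to justify strictness in the triangle inequality, which is genuinely delicate when $\sigma$ is a point mass (there the modulus of the integral actually equals $1$, and one must instead observe that the integral cannot equal $+1$ because $(z-t)^2$ is never a positive real for $z\in\mathbb{C}^+$). One caveat in your write-up: the decomposition $\frac{1+tz}{z-t}=t+\frac{1+t^2}{z-t}$ splits $H_{\nu}$ into two integrals that need not converge separately unless $\int_{\mathbb{R}}|t|\,d\sigma(t)<\infty$, which is not guaranteed for a general finite $\sigma$; you should instead differentiate the original kernel directly, $\frac{d}{dz}\frac{1+tz}{z-t}=-\frac{1+t^2}{(z-t)^2}$, and pass the derivative under the integral (legitimate since this kernel is uniformly bounded in $t$ for $z$ in compact subsets of $\mathbb{C}^+$), which yields the same formula for $H_{\nu}'$ without the spurious integrability assumption.
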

\begin{proof} 
We have
  \begin{equation}\nonumber
     H_{\nu}'(z)=1-\int_{\mathbb{R}}\frac{1+t^2}{(z-t)^2}d\sigma(t),\,\,z\in \mathbb{C}^+.
  \end{equation}
When $x\in A_{\nu}$ and $z=x+iu_{\nu}(x)$, a straightforward calculation and the definition of $u_{\nu}$ lead to
\[
 \begin{split}
   \left| \int_{\mathbb{R}}\frac{1+t^2}{(z-t)^2} d\sigma(t) \right|
    &<\int_{\mathbb{R}}\frac{1+t^2}{|z-t|^2}d\sigma(t)\\
     &=\int_{\mathbb{R}}\frac{1+t^2}{(t-x)^2+u_{\nu}(x)^2}d\sigma(t)=1,
 \end{split}
\] which implies the desired conclusion. 
\end{proof} 

We conclude this section with a useful result.
\begin{lemma}\label{lemma:2.7}
Consider measures $\nu,\nu_n\in \mathcal{ID}(\boxplus)$,
$n\in \mathbb{N}$, such that
$\nu_n\rightarrow \nu $ weakly as $n\rightarrow \infty$, and let 
$I\subset \mathbb{R}$ be a compact interval such that the limiting density $d\nu/dx$ is bounded away from zero on $I$.
Then the density $d\nu_n/dx$ converges uniformly on $I$ to $d\nu/dx$ as $n\rightarrow\infty$.
\end{lemma}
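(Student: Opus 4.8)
The plan is to reduce the statement to the uniform convergence $F_{\nu_n}\to F_\nu$ on $I$ of the continuous boundary extensions provided by Proposition \ref{prop:2.2}. Granting this, observe that the hypothesis together with $\frac{d\nu}{dx}=\frac1\pi\frac{\Im F_\nu}{|F_\nu|^2}$ forces $\Im F_\nu$ to be bounded below by a positive constant and $|F_\nu|$ to be bounded away from $0$ and $\infty$ on the compact set $I$; hence the same bounds hold for $F_{\nu_n}$ once $n$ is large (in particular no point of $I$ is then an atom of $\nu_n$), and since $w\mapsto\Im w/|w|^2$ is uniformly continuous on the relevant compact part of $\mathbb C^+\cup\mathbb R$, we get $\frac{d\nu_n}{dx}=\frac1\pi\frac{\Im F_{\nu_n}}{|F_{\nu_n}|^2}\to\frac1\pi\frac{\Im F_\nu}{|F_\nu|^2}=\frac{d\nu}{dx}$ uniformly on $I$.

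To get that uniform convergence I would pass to the inverse functions. Put $L:=F_\nu(I)$, a compact subset of $\mathbb C^+$ lying on the graph $\gamma_\nu=F_\nu(\mathbb R)$; since $\Im F_\nu>0$ on $I$, each point of $L$ has the form $x+iu_\nu(x)$ with $x\in A_\nu$, so Lemma \ref{lemma:2.6} gives $H_\nu'\neq0$ on $L$. Because $H_\nu$ is holomorphic on $\mathbb C^+$, injective on $\overline{\Omega_\nu}\supset L$, and has nonvanishing derivative on $L$, a routine compactness argument produces an open set $W$ with $L\subset W$ and $\overline W$ a compact subset of $\mathbb C^+$ such that $H_\nu|_{\overline W}$ is injective with nonvanishing derivative; then $H_\nu(W)$ is an open neighborhood of $H_\nu(L)=I$, and I fix a bounded connected open set $V_0$ with $I\subset V_0$, $\overline{V_0}\subset H_\nu(W)$, and $V_0\cap\mathbb C^+$ connected. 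Since $\nu_n,\nu\in\mathcal{ID}(\boxplus)$ and $\nu_n\to\nu$ weakly, the free generating pairs satisfy $\gamma_n\to\gamma$ and $\sigma_n\to\sigma$ weakly; combining this with the identity $\int\frac{1+tz}{z-t}\,d\sigma_n(t)=-z\,\sigma_n(\mathbb R)+(1+z^2)G_{\sigma_n}(z)$ and the local uniform convergence of Cauchy transforms gives $H_{\nu_n}\to H_\nu$, and then $H_{\nu_n}'\to H_\nu'$ by Cauchy's estimates, uniformly on $\overline W$; weak convergence also yields $F_{\nu_n}=1/G_{\nu_n}\to F_\nu$ uniformly on compact subsets of $\mathbb C^+$. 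Standard applications of Hurwitz's and Rouché's theorems now show that, for all large $n$, $H_{\nu_n}$ is injective on $W$, that $H_{\nu_n}(W)\supset\overline{V_0}$ (using that the distance from $H_\nu(\partial W)$ to $\overline{V_0}$ is positive, which holds because $H_\nu$ is injective on $\overline W$), and that the holomorphic inverse $\Psi_n:=(H_{\nu_n}|_W)^{-1}\colon\overline{V_0}\to W$ converges to $F_\nu$ uniformly on $\overline{V_0}$ --- e.g. via the residue formula $\Psi_n(t)=\frac1{2\pi i}\oint_{\partial W}\frac{wH_{\nu_n}'(w)}{H_{\nu_n}(w)-t}\,dw$ and its $H_\nu$-analogue.

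The step I expect to be the main obstacle is the identification of the boundary value of $F_{\nu_n}$ on $I$ with $\Psi_n$: a priori $F_{\nu_n}(t)$ for $t\in I$ need not lie in $W$, and the convergence $F_{\nu_n}\to F_\nu$ available for free is only on compact subsets of the open half-plane, not up to $\mathbb R$. I would handle this by connectedness. Fix $z_0\in V_0\cap\mathbb C^+$. Since $F_{\nu_n}(z_0)\to F_\nu(z_0)\in W$, we have $F_{\nu_n}(z_0)\in W$ for large $n$, and then $F_{\nu_n}(z_0)=\Psi_n(z_0)$ because $H_{\nu_n}$ is injective on $W$ and sends both points to $z_0$. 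The set $A_n:=\{z\in V_0\cap\mathbb C^+:F_{\nu_n}(z)\in W\}$ is open, contains $z_0$, and is relatively closed in the connected set $V_0\cap\mathbb C^+$ --- indeed if $A_n\ni z_j\to z^*$ then $F_{\nu_n}(z_j)=\Psi_n(z_j)\to\Psi_n(z^*)\in W$ while $F_{\nu_n}(z_j)\to F_{\nu_n}(z^*)$, so $F_{\nu_n}(z^*)\in W$ --- hence $A_n=V_0\cap\mathbb C^+$, and on it $F_{\nu_n}=\Psi_n$ (both values lie in $W$ and have the same image under the injective $H_{\nu_n}$). Passing to the boundary (both functions are continuous up to $I$ by Proposition \ref{prop:2.2}), $F_{\nu_n}=\Psi_n$ on $I$, so $F_{\nu_n}\to F_\nu$ uniformly on $I$, and the first paragraph finishes the proof.
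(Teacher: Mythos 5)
Your argument is correct and follows essentially the same route as the paper: weak convergence of the free generating pairs gives $H_{\nu_n}\to H_{\nu}$ locally uniformly, Lemma \ref{lemma:2.6} gives $H_{\nu}'\neq 0$ on $F_{\nu}(I)$, the inverses are recovered and compared via the Cauchy (residue) integral formula, and the Stieltjes inversion formula finishes the proof. The only difference is that you spell out the identification of the locally constructed inverse $\Psi_n$ with the boundary values of $F_{\nu_n}$ on $I$ (via the connectedness argument), a step the paper's proof leaves implicit.
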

\begin{proof}
Let $(\gamma,\sigma)$,$(\gamma_n,\sigma_n) $ be the free generating pairs of $\nu$ and $\nu_n$, respectively. As seen earlier, $\gamma_n\rightarrow\gamma$ 
and $\sigma_n\rightarrow \sigma$ weakly as $n\rightarrow \infty$. Thus, the sequence $H_{\nu_n}$ converges to the function $H_{\nu}$ uniformly on compact subsets of $\mathbb{C}^+$.

It is clear that $\Re F_{\nu}(I) \subset A_{\nu}$. Thus, 
by Lemma \ref{lemma:2.6},
$H'_{\nu}(z)\neq 0$ for $z\in F_{\nu}(I)$, and 
its inverse function $F_{\nu}$ has a conformal continuation to a neighborhood
of $I$. 
Expressing inverse functions using the Cauchy integral formula, we 
conclude that, for large $n$, $F_{\nu_n}$ also 
has a conformal continuation to a neighborhood of $I$. Moreover,
these continuations converge uniformly on $I$ to the continuation of 
$F_{\nu}$. Since $0\notin F_{\nu}(I)$, the lemma follows from 
the Stieltjes inversion formula.
\end{proof}

\section{free convolution powers and subordination functions}
Given two probability measures $\mu_1$ and $\mu_2$ on $\mathbb{R}$, 
there exist two unique analytic functions $\omega_1,\omega_2:\mathbb{C}^+\rightarrow\mathbb{C}^+$ such that
$F_{\mu_1\boxplus \mu_2}(z)=F_{\mu_1}(\omega_1(z))=F_{\mu_2}(\omega_2(z))$ and 
  \begin{equation}\nonumber
      F_{\mu_1\boxplus\mu_2}(z)
=\omega_1(z)+\omega_2(z)-z  \end{equation}
for all $z\in \mathbb{C}^+$ (see \cite{DVV1993, Biane1998}).

Consider now a sequence $\{\mu_{n}\}_{n=1}^{\infty}$ in $\mathcal{M}$ and positive integers $k_n \geq 2$, and denote by $\mu_{n}^{\boxplus k_n}$ the $k_n$-fold
free convolution power of $\mu_{n}$. 
It is known that $\mu_{n}^{\boxplus k_n}$ has at most one atom and otherwise $\mu_{n}^{\boxplus k_n}$ is absolutely continuous \cite{BB2005}. The analytic subordination for these free convolution powers was also studied in \cite{BB2005}. Thus, let $\omega_n:\mathbb{C}^+\rightarrow\mathbb{C}^+$ be the subordination function of
$F_{\mu_{n}^{\boxplus k_n}}$ with respect to $F_{\mu_{n}}$, that is, $F_{\mu_{n}^{\boxplus k_n}}(z)=F_{\mu_{n}}(\omega_n(z))$.
Then we have 
   \begin{equation}\label{eq:3.1}
       F_{\mu_{n}^{\boxplus k_n}}(z)=F_{\mu_{n}}(\omega_n(z))=\omega_n(z)+\frac{1}{k_n-1}(\omega_n(z)-z), \quad z \in \mathbb{C}^{+}.
   \end{equation}
Equation (\ref{eq:3.1}) implies that the inverse function \[\omega_n^{-1}(z)=z+(k_n-1)(z-F_{\mu_{n}}(z))\] for $z\in \Gamma_{\eta,M}$, where $\eta,M$ are positive constants. 
On the other hand, the function $\omega_n$ can be regarded as the $F$-transform of a unique probability measure on $\mathbb{R}$
by the characterization of $F$-transforms (see \cite[Proposition 5.2]{BV1993}).
Let $\rho_{n}$ be the probability measure
on $\mathbb{R}$ such that $\omega_n(z)=F_{\rho_{n}}(z)$, so that
\begin{equation}\label{eq:3.2}
   \varphi_{\rho_{n}}(z)=(k_n-1)(z-F_{\mu_{n}}(z)).
\end{equation} This implies
that the measure $\rho_{n}$ is $\boxplus$-infinitely divisible. 
In particular, the function $\omega_n$ extends continuously to $\mathbb{C}^+\cup\mathbb{R}$ and so does the function $F_{\mu_{n}^{\boxplus k_n}}$ by \eqref{eq:3.1}.

Denote by $E_{\mu}(z)=z-F_{\mu}(z)$ the self-energy of $\mu$. Given two measures $\mu_1,\mu_2 \in \mathcal{M}$,
their Boolean convolution $\mu_1\uplus \mu_2$, introduced in \cite{RR1997}, is the unique probability measure on $\mathbb{R}$ satisfying \[E_{\mu_1\uplus\mu_2}(z)=E_{\mu_1}(z)+E_{\mu_2}(z), \quad z \in \mathbb{C}^{+}.\]
Every probability measure on $\mathbb{R}$ is $\uplus$-infinitely divisible.
Given a measure $\nu \in \mathcal{M}$,
the function $E_{\nu}$ is a map from $\mathbb{C}^+$ to $\mathbb{C}^-\cup\mathbb{R}$ and satisfies
$E_{\nu}(iy)/iy \rightarrow 0$ as $y\rightarrow \infty$. (The latter limit actually holds uniformly for $\nu$ in any tight family of probability measures \cite{BV1993}.) Thus, $E_{\nu}$ admits a unique \emph{Nevanlinna representation}: \[E_{\nu}(z)= \gamma + \int_{\mathbb{R}}\frac{1+tz}{z-t}\,d\sigma(t), \quad z \in \mathbb{C}^{+}.\] Conversely, every such  formula defines an analytic function which is of the form $E_{\nu}$ for a unique probability measure $\nu$. We will write $\nu=\nu_{\uplus}^{\gamma,\sigma}$ to indicate this correspondence. Note that $E_{\nu_{\uplus}^{\gamma,\sigma}}(z)=\varphi_{\nu_{\boxplus}^{\gamma,\sigma}}(z)$, and that the map $\nu_{\boxplus}^{\gamma,\sigma}\rightarrow \nu_{\uplus}^{\gamma,\sigma}$ is a bijective map from the set $\mathcal{ID}(\boxplus)$ into the set $\mathcal{M}$. Finally, it is easy to verify that if a sequence $\nu_{n}$ converges weakly to a law $\nu$ in $\mathcal{M}$, then the limit $\lim_{n\rightarrow \infty}E_{\nu_{n}}(z)=E_{\nu}(z)$ holds for $z \in \mathbb{C}^{+}$.

We record for further use the following result from \cite[Theorem 6.3]{BPata1999}.
\begin{thm}\label{thm:3.1}
Fix a free generating pair $(\gamma, \sigma)$,
a sequence $\{\mu_n\}_{n=1}^{\infty}$ in $\mathcal{M}$, and 
a sequence $\{k_n\}_{n=1}^{\infty}$ of unbounded positive integers. Then the sequence $\mu_n^{\boxplus k_n}$ converges weakly to $\nu_{\boxplus}^{\gamma,\sigma}$ if and only if the sequence $\mu_n^{\uplus k_n}$ converges weakly to $\nu_{\uplus}^{\gamma,\sigma}$.
\end{thm}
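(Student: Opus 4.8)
The plan is to translate both weak convergences into conditions on the linearizing transforms --- the Voiculescu transform $\varphi$ for $\boxplus$ and the self-energy $E$ for $\uplus$ --- and to compare those conditions directly. Write $\Psi(z)=\gamma+\int_{\mathbb{R}}\frac{1+tz}{z-t}\,d\sigma(t)$, so that $\Psi$ is at once $\varphi_{\nu_{\boxplus}^{\gamma,\sigma}}$ and $E_{\nu_{\uplus}^{\gamma,\sigma}}$. Because $\varphi_{\mu_n^{\boxplus k_n}}=k_n\varphi_{\mu_n}$, Proposition \ref{prop:2.1} says that $\mu_n^{\boxplus k_n}\to\nu_{\boxplus}^{\gamma,\sigma}$ weakly if and only if there exist $\eta,M>0$ such that each $\varphi_{\mu_n}$ is defined on $\Gamma_{\eta,M}$, $k_n\varphi_{\mu_n}(iy)\to\Psi(iy)$ for $y>M$, and $k_n\varphi_{\mu_n}(iy)=o(y)$ uniformly in $n$. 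On the Boolean side there is a simpler statement, since $E_\lambda$ and $G_\lambda=1/F_\lambda$ are defined on all of $\mathbb{C}^+$ for every $\lambda\in\mathcal{M}$: from $E_{\mu_n^{\uplus k_n}}=k_nE_{\mu_n}$ one gets $G_{\mu_n^{\uplus k_n}}(z)=1/\bigl(z-k_nE_{\mu_n}(z)\bigr)$, and if $k_nE_{\mu_n}(iy)\to\Psi(iy)$ for all large $y$ then $G_{\mu_n^{\uplus k_n}}(iy)\to G_{\nu_{\uplus}^{\gamma,\sigma}}(iy)$, so by Vitali's theorem (the Cauchy transforms being locally bounded on $\mathbb{C}^+$) $G_{\mu_n^{\uplus k_n}}\to G_{\nu_{\uplus}^{\gamma,\sigma}}$ locally uniformly, whence $\mu_n^{\uplus k_n}\to\nu_{\uplus}^{\gamma,\sigma}$ vaguely and, both being probability measures, weakly; the converse implication is immediate. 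Thus everything reduces to comparing $k_n\varphi_{\mu_n}$ and $k_nE_{\mu_n}$ along the ray $\{iy:y>M\}$.

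The bridge between the two transforms is the identity $\varphi_{\mu_n}(z)=E_{\mu_n}\bigl(z+\varphi_{\mu_n}(z)\bigr)$, which follows at once from $F_{\mu_n}^{-1}(z)=z+\varphi_{\mu_n}(z)$ and $E_{\mu_n}(z)=z-F_{\mu_n}(z)$. Either hypothesis makes the array \emph{infinitesimal}: if $\mu_n^{\boxplus k_n}\to\nu_{\boxplus}^{\gamma,\sigma}$, the family $\{\mu_n^{\boxplus k_n}\}$ is tight, and the standard estimates for $F$-transforms of a tight family produce a cone $\Gamma_{\eta,M}$ on which $\varphi_{\mu_n}$ is defined with $|\varphi_{\mu_n}(x+iy)|\le Cy/k_n$; the identity above and the open mapping theorem then force $E_{\mu_n}\to0$ locally uniformly on a slightly smaller cone. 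Symmetrically, if $\mu_n^{\uplus k_n}\to\nu_{\uplus}^{\gamma,\sigma}$ then $|E_{\mu_n}(x+iy)|\le Cy/k_n$ on a cone, so $F_{\mu_n}=z-E_{\mu_n}$ is a small perturbation of the identity there and its right inverse furnishes $\varphi_{\mu_n}$ with $|\varphi_{\mu_n}|\le 2|E_{\mu_n}|$. In both cases $\varphi_{\mu_n},E_{\mu_n}\to0$ locally uniformly on a common cone, hence by Cauchy's estimate $E_{\mu_n}'\to0$ there too. Then, from $E_{\mu_n}(z+\varphi_{\mu_n}(z))-E_{\mu_n}(z)=\varphi_{\mu_n}(z)\int_0^1E_{\mu_n}'(z+t\varphi_{\mu_n}(z))\,dt$, multiplication by $k_n$ gives
\[
 k_n\varphi_{\mu_n}(z)-k_nE_{\mu_n}(z)=\bigl(k_n\varphi_{\mu_n}(z)\bigr)\int_0^1E_{\mu_n}'\bigl(z+t\varphi_{\mu_n}(z)\bigr)\,dt ,
\]
and since $k_n\varphi_{\mu_n}(z)$ is bounded on compact subsets of the cone while $E_{\mu_n}'\to0$ locally uniformly, the right-hand side tends to $0$ pointwise on the ray; indeed $|k_n\varphi_{\mu_n}(iy)-k_nE_{\mu_n}(iy)|\le C'y/k_n$ uniformly in $y>M$. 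Consequently $k_n\varphi_{\mu_n}(iy)\to\Psi(iy)$ exactly when $k_nE_{\mu_n}(iy)\to\Psi(iy)$, and the uniform $o(y)$-condition transfers as well; combining this with the two convergence criteria establishes both implications.

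The step I expect to be the main obstacle is the quantitative infinitesimality $|\varphi_{\mu_n}(x+iy)|\le Cy/k_n$ (resp.\ $|E_{\mu_n}(x+iy)|\le Cy/k_n$) on a \emph{common} truncated cone. This means extracting from tightness of $\{\mu_n^{\boxplus k_n}\}$ (resp.\ $\{\mu_n^{\uplus k_n}\}$) a uniform lower bound $|G_\mu(x+iy)|\geq c/y$ on $\Gamma_{\eta,M}$ --- coming from $|\Im G_\mu(x+iy)|\ge y\,\mu([-R,R])/\bigl((|x|+R)^2+y^2\bigr)$ with $R$ chosen uniformly by tightness --- hence a uniform upper bound $|F_\mu(x+iy)|\le Cy$, and then the usual right-inverse construction for $F_{\mu_n}$ near infinity together with careful bookkeeping of the successively shrinking cones; this is essentially the content underpinning Proposition \ref{prop:2.1}. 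A less self-contained alternative avoids part of this: by \eqref{eq:3.2} the measure $\rho_n$ of Section 3 has $\varphi_{\rho_n}=(k_n-1)E_{\mu_n}$, so $\rho_n$ is the image of $\mu_n^{\uplus(k_n-1)}$ under the bijection $\nu_{\uplus}^{\gamma',\sigma'}\mapsto\nu_{\boxplus}^{\gamma',\sigma'}$; since $F_{\mu_n^{\boxplus k_n}}=F_{\mu_n}\circ\omega_n$ with $\omega_n=F_{\rho_n}$ and $F_{\mu_n}$ is close to the identity, $\mu_n^{\boxplus k_n}$ and $\rho_n$ have the same weak limit, and the weak-continuity of that bijection (plus the harmless extra factor $\mu_n\to\delta_0$ in the Boolean product) yields the equivalence.
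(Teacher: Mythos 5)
The paper offers no proof of this statement: Theorem \ref{thm:3.1} is quoted directly from Bercovici and Pata \cite{BPata1999} (their Theorem 6.3), so there is nothing internal to compare against. What you have written is, in substance, a correct reconstruction of the Bercovici--Pata argument specialized to identical rows: both weak convergences are reduced to the convergence of $k_n\varphi_{\mu_n}(iy)$, respectively $k_nE_{\mu_n}(iy)$, to the common function $\Psi(iy)$, and the two are compared through the identity $\varphi_{\mu_n}=E_{\mu_n}\circ(\mathrm{id}+\varphi_{\mu_n})$ together with infinitesimality, i.e.\ $\mu_n\to\delta_0$, which follows from either hypothesis precisely because $k_n\to\infty$. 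The steps you single out as the technical heart --- a truncated cone $\Gamma_{\eta,M}$ independent of $n$ on which $\varphi_{\mu_n}$ is defined and comparable to $E_{\mu_n}$, with both of size $O(y/k_n)$, plus the Cauchy estimate forcing $E_{\mu_n}'\to0$ --- are exactly the lemmas of \cite{BV1993,BPata1999} underlying Proposition \ref{prop:2.1}, and your use of them is sound; in particular the factorization $k_n\varphi_{\mu_n}-k_nE_{\mu_n}=\bigl(k_n\varphi_{\mu_n}\bigr)\int_0^1E_{\mu_n}'\bigl(z+t\varphi_{\mu_n}(z)\bigr)\,dt$ does show that the difference is $O(y/k_n)$, so both the pointwise limits and the uniform $o(y)$ condition transfer between the two transforms. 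One caution about your ``less self-contained alternative'': in this paper the statement $\rho_n\to\nu$ is Proposition \ref{prop:3.2}, whose proof invokes Theorem \ref{thm:3.1}, so it cannot be borrowed without circularity. That route is nevertheless salvageable, and arguably cleaner: from \eqref{eq:3.1} one gets $\omega_n-F_{\mu_n^{\boxplus k_n}}=E_{\mu_n^{\boxplus k_n}}/k_n\to0$ locally uniformly, which yields the equivalence of $\mu_n^{\boxplus k_n}\to\nu$ with $\rho_n\to\nu$ without any appeal to Proposition \ref{prop:3.2}; then $\varphi_{\rho_n}=(k_n-1)E_{\mu_n}=\tfrac{k_n-1}{k_n}E_{\mu_n^{\uplus k_n}}$ together with Proposition \ref{prop:2.1} and your Boolean criterion closes the loop, avoiding most of the cone bookkeeping of the main argument.
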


Boolean limit theorems are used in the proof of the following result.
\begin{prop}\label{prop:3.2}
Let $\{\mu_n\}_{n=1}^{\infty} \subset \mathcal{M}$ and let $\{k_n\}_{n=1}^{\infty} \subset \mathbb{N}$ such that $\lim_{n \rightarrow \infty}k_n=\infty$. Suppose the sequence $\mu_n^{\boxplus k_n}$ converges weakly to a law $\nu \in \mathcal{ID}(\boxplus)$.
For each $n$, choose $\rho_n\in\mathcal{ID}(\boxplus) $, such that
    \begin{equation}\nonumber
       \f{\mu_n^{\boxplus k_n}}(z)=\f{\mu_n}(\f{\rho_n}(z)), \quad z \in \mathbb{C}^{+}.
    \end{equation}
Then $\rho_n\rightarrow \nu$ weakly.    
\end{prop}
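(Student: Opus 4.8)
The plan is to establish the weak convergence $\rho_n\to\nu$ via the Voiculescu‑transform criterion of Proposition \ref{prop:2.1}, after rewriting $\varphi_{\rho_n}$ in terms of Boolean convolution powers of $\mu_n$.

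Write $\nu=\nu_{\boxplus}^{\gamma,\sigma}$ for the free generating pair $(\gamma,\sigma)$ determined by $\nu$. Since $k_n\to\infty$, we may discard finitely many terms and assume $k_n\geq 2$ for all $n$ (this is in any case implicit in the subordination relation \eqref{eq:3.1}). By \eqref{eq:3.2} we have $\varphi_{\rho_n}(z)=(k_n-1)E_{\mu_n}(z)$ for $z\in\mathbb{C}^+$; in particular each $\rho_n$ is $\boxplus$‑infinitely divisible, so $\varphi_{\rho_n}$ is defined on all of $\mathbb{C}^+$, and so is $\varphi_\nu$. Because Boolean convolution linearizes the self‑energy, $E_{\mu_n^{\uplus k_n}}=k_nE_{\mu_n}$, and therefore
\[
\varphi_{\rho_n}(z)=\frac{k_n-1}{k_n}\,E_{\mu_n^{\uplus k_n}}(z),\qquad z\in\mathbb{C}^+.
\]

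Next I would feed in the hypothesis through Theorem \ref{thm:3.1}: since $\mu_n^{\boxplus k_n}\to\nu_{\boxplus}^{\gamma,\sigma}$ weakly, the Boolean powers satisfy $\mu_n^{\uplus k_n}\to\nu_{\uplus}^{\gamma,\sigma}$ weakly. From this I extract two facts. First, as recalled in Section 3, weak convergence of probability measures forces $E_{\mu_n^{\uplus k_n}}(z)\to E_{\nu_{\uplus}^{\gamma,\sigma}}(z)=\varphi_{\nu_{\boxplus}^{\gamma,\sigma}}(z)=\varphi_\nu(z)$ for every $z\in\mathbb{C}^+$; since $(k_n-1)/k_n\to 1$ and a convergent numerical sequence is bounded, the displayed identity gives $\varphi_{\rho_n}(iy)\to\varphi_\nu(iy)$ for every $y>0$. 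Second, a weakly convergent sequence is tight, so by the uniform Nevanlinna estimate quoted in Section 3, $E_{\mu_n^{\uplus k_n}}(iy)/(iy)\to 0$ uniformly in $n$ as $y\to\infty$; since $|\varphi_{\rho_n}(iy)|\leq|E_{\mu_n^{\uplus k_n}}(iy)|$, also $\varphi_{\rho_n}(iy)=o(y)$ uniformly in $n$. As $\varphi_{\rho_n}$ and $\varphi_\nu$ are defined on every truncated cone $\Gamma_{\eta,M}$, Proposition \ref{prop:2.1} then yields $\rho_n\to\nu$ weakly.

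The only genuinely delicate point is the uniform $o(y)$ control on $\varphi_{\rho_n}$, which a priori pairs the large factor $k_n-1$ with the small quantity $E_{\mu_n}$; the reduction to $E_{\mu_n^{\uplus k_n}}$ disposes of this by converting it into the standard tightness estimate for the genuinely weakly convergent family $\{\mu_n^{\uplus k_n}\}$. An essentially equivalent route would be to note that the Boolean measure with $E$‑transform $\varphi_{\rho_n}$ is precisely $\mu_n^{\uplus(k_n-1)}$, deduce $\mu_n^{\uplus(k_n-1)}\to\nu_{\uplus}^{\gamma,\sigma}$ weakly from the Boolean analogue of Proposition \ref{prop:2.1}, and transfer the conclusion back along the bijection $\nu_{\uplus}^{\gamma,\sigma}\leftrightarrow\nu_{\boxplus}^{\gamma,\sigma}$ on $\mathcal{ID}(\boxplus)$ (which is a homeomorphism by the generating‑pair description of weak convergence); but applying Proposition \ref{prop:2.1} directly to $\varphi_{\rho_n}$ avoids invoking that transfer.
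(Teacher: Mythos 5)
Your proof is correct and follows essentially the same route as the paper: invoke Theorem \ref{thm:3.1} to get $\mu_n^{\uplus k_n}\to\nu_{\uplus}^{\gamma,\sigma}$, express $\varphi_{\rho_n}$ via \eqref{eq:3.2} in terms of $E_{\mu_n^{\uplus k_n}}$, use tightness for the uniform $o(y)$ bound, and conclude with Proposition \ref{prop:2.1}. The only (harmless) difference is cosmetic: you write $\varphi_{\rho_n}=\frac{k_n-1}{k_n}E_{\mu_n^{\uplus k_n}}$ where the paper writes $\varphi_{\rho_n}=E_{\mu_n^{\uplus k_n}}-E_{\mu_n}$, which spares you the observation that $\mu_n\to\delta_0$.
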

\begin{proof}
Assume that $(\gamma, \sigma)$ is the free generating pair of $\nu$.
By Proposition \ref{prop:2.1}, the
weak convergence $\mu_n^{\boxplus k_n}\rightarrow\nu_{\boxplus}^{\gamma,\sigma}$ implies 
the existence of $M>0$ such that \[\lim_{n\rightarrow \infty}k_n\varphi_{\mu_n}(iy)=\varphi_{\nu_{\boxplus}^{\gamma,\sigma}}(iy)\]
for all $y>M$, and $k_n\varphi_{\mu_n}(iy)=o(y)$ uniformly in $n$ as $y \rightarrow \infty$. 
In particular, it follows that the sequence $\mu_n$ converges weakly to the unit point mass at $0$. On the other hand, Theorem \ref{thm:3.1} shows that ${\mu_n^{\uplus k_n}}\rightarrow \nu_{\uplus}^{\gamma,\sigma}$ weakly.

By \eqref{eq:3.2}, we have \[\varphi_{\rho_n}(z)=E_{\mu_n^{\uplus k_n}}(z)-E_{\mu_n}(z), \quad z \in \mathbb{C}^{+}.\] Since the two sequences $\{\mu_n^{\uplus k_n}\}_{n=1}^{\infty}$ and $\{\mu_n\}_{n=1}^{\infty}$ are both tight, the last formula implies that $\varphi_{\rho_n}(iy)=o(y)$ uniformly in $n$ as $y\rightarrow \infty$. To determine the limit of $\{\rho_{n}\}_{n=1}^{\infty}$, we calculate \[\lim_{n\rightarrow \infty}\varphi_{\rho_n}(iy)=\lim_{n\rightarrow \infty}[E_{\mu_n^{\uplus k_n}}(iy)-E_{\mu_n}(iy)]=E_{\nu_{\uplus}^{\gamma,\sigma}}(iy)=\varphi_{\nu_{\boxplus}^{\gamma,\sigma}}(iy) \] for every $y>M$. The desired conclusion follows from Proposition \ref{prop:2.1}.
\end{proof}

\section{the main result}
In the following statement, $F_{\nu}$ is viewed as a continuous function 
defined on $\mathbb{C}^+\cup \mathbb{R}$. 
\begin{thm}\label{thm:4.1}
Consider a nondegenerate $\boxplus$-infinitely divisible distribution $\nu$ on $\mathbb{R}$, a sequence $\{\mu_n\}_{n=1}^{\infty}$ of probability measures on $\mathbb{R}$, and a sequence $\{k_n\}_{n=1}^{\infty}$ of positive integers tending to infinity such that the sequence $\mu_n^{\boxplus k_n}$ converges weakly to $\nu$. 
   \begin{enumerate}[$(1)$]
     \item If $0 \notin F_{\nu}(\mathbb{R})$, then the measure $\nu$ has no atom and
      there exists $N>0$ such that the measure $\mu_n^{\boxplus k_n}$ 
     is Lebesgue absolutely continuous with a continuous density on $\mathbb{R}$ for every $n\geq N$. Moreover, the density of the measure $\mu_n^{\boxplus k_n}$ converges uniformly on $\mathbb{R}$ to the density of the measure $\nu$.
     \item If $0\in F_{\nu}(\mathbb{R})$, and $U\subset \mathbb{R}$ is an open interval containing the singleton $F_{\nu}^{-1}(\{0\})$,
      then there exists $N>0$ such that the measure $\mu_n^{\boxplus k_n}$ is absolutely continuous with a continuous density on $\mathbb{R}\setminus U$ for $n \geq N$. Moreover, the density of the measure $\mu_n^{\boxplus k_n}$ converges uniformly on $\mathbb{R}\setminus U$ to the density of the measure $\nu$.
      \item In all cases, the limit \[\lim_{n\rightarrow \infty}\left\lVert\frac{d\mu_n^{\boxplus k_n}}{dx}-\frac{d\nu}{dx}\right\rVert_{L^{p}\left(\mathbb{R}\setminus U\right)}=0\] holds for $p>1$, with $U=\varnothing$ in case \emph{(1)}. 
   \end{enumerate} 
\end{thm}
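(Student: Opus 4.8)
My plan is to pass from $\mu_n^{\boxplus k_n}$ to the auxiliary $\boxplus$‑infinitely divisible measures $\rho_n$ supplied by Proposition~\ref{prop:3.2}, which satisfy $\rho_n\to\nu$ weakly. Writing $F_n=F_{\mu_n^{\boxplus k_n}}$, identity \eqref{eq:3.1} reads
\[
F_n(z)=\frac{k_n}{k_n-1}\,F_{\rho_n}(z)-\frac{1}{k_n-1}\,z,\qquad z\in\mathbb{C}^+\cup\mathbb{R},
\]
the continuous extension to $\mathbb{R}$ being legitimate because $\rho_n\in\mathcal{ID}(\boxplus)$. On the real axis this gives $\Im F_n(t)=\tfrac{k_n}{k_n-1}\Im F_{\rho_n}(t)$ and $\Re F_n(t)=\tfrac{k_n}{k_n-1}\Re F_{\rho_n}(t)-\tfrac{t}{k_n-1}$, i.e. $F_n-F_{\rho_n}=-\tfrac{1}{k_n-1}(F_{\rho_n}-\mathrm{id})$. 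Thus the whole problem is reduced to the conformal maps $F_{\rho_n}$ attached to the $\boxplus$‑infinitely divisible measures $\rho_n\to\nu$.

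The theorem will follow from the single claim that, for $\rho_n,\nu\in\mathcal{ID}(\boxplus)$ with $\rho_n\to\nu$ weakly, $F_{\rho_n}\to F_\nu$ uniformly on every compact subset of $\mathbb{R}$. Indeed, using \eqref{eq:2.2} for $\rho_n$ and $\sup_n|F_{\rho_n}(i)|<\infty$ (since $F_{\rho_n}(i)\to F_\nu(i)$) one gets $|F_{\rho_n}(t)|\geq\tfrac12|t|-C_1$ with $C_1$ independent of $n$, hence from the displayed identity $|F_n(t)|\geq|t|/8$ and $d\mu_n^{\boxplus k_n}/dx(t)\leq 8/(\pi|t|)$ once $|t|$ and $n$ are large; together with the lemma on the decay of $s_\nu$ this confines the uniform statement to a fixed interval $[-R_0,R_0]$. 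On $[-R_0,R_0]$ the claim gives $F_{\rho_n}\to F_\nu$ uniformly, hence $F_{\rho_n}$ is bounded uniformly in $n$ and $F_n\to F_\nu$ uniformly as well; since $\inf\{|F_\nu(t)|:t\in\mathbb{R}\setminus U\}=:c>0$ — automatic in case~(1), where a zero of $F_\nu$ would be an atom, and in case~(2) because the unique zero $t_\nu=F_\nu^{-1}(\{0\})$ lies in $U$ while $|F_\nu(t)|\to\infty$ — we have $|F_n|\geq c/2$ on $[-R_0,R_0]\setminus U$ for large $n$, so the Stieltjes inversion formula yields $d\mu_n^{\boxplus k_n}/dx\to d\nu/dx$ uniformly on $\mathbb{R}\setminus U$. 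Part~(3) is then routine: uniform convergence on $[-R_0,R_0]\setminus U$ gives $L^p$ convergence there (by \eqref{eq:2.4}, $|s_\nu|^p$ is integrable), and on the tail $\|d\mu_n^{\boxplus k_n}/dx-d\nu/dx\|_{L^p}^p\leq(8/(\pi R_0))^{p-1}\cdot 2$, which is made small by enlarging $R_0$.

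To prove the claim I would proceed in three steps. (a) $u_{\rho_n}\to u_\nu$ uniformly on compacts: if $x_n\to x_*$, the points $x_n+iu_{\rho_n}(x_n)$ stay bounded — otherwise $\Im H_{\rho_n}$ would be negative at $x_n+i(u_\nu(x_*)+1)$, contradicting its convergence to the positive number $\Im H_\nu(x_*+i(u_\nu(x_*)+1))$ — and every subsequential limit $x_*+iy_*$ satisfies $\Im H_\nu(x_*+iy_*)=0$, using that $H_{\rho_n}\to H_\nu$ uniformly on compacts of $\mathbb{C}^+$ (the free generating pairs converge), that $\Omega_{\rho_n}$ is the region above $\gamma_{\rho_n}$, and that $\Im H_{\rho_n}$ is positive above $\gamma_{\rho_n}$ and negative strictly below it; hence $y_*=u_\nu(x_*)$. (b) Consequently $\gamma_{\rho_n}\to\gamma_\nu$ uniformly on compacts, and via the bound $|H_{\rho_n}(w_1)-H_{\rho_n}(w_2)|\leq 2|w_1-w_2|$ on $\overline{\Omega_{\rho_n}}$ the boundary restrictions $F_{\rho_n}|_{\mathbb{R}}$ are equicontinuous on compacts. (c) One already knows $F_{\rho_n}(t)\to F_\nu(t)$ on the dense set of $t$ for which $F_\nu$ extends analytically past $t$ — namely $\Re F_\nu(t)\in A_\nu$ (Lemma~\ref{lemma:2.6}), or $H_\nu$ analytic near $\Re F_\nu(t)$ with $H_\nu'=1-g>0$ there — by the Cauchy‑integral/Hurwitz argument used in the proof of Lemma~\ref{lemma:2.7}; equicontinuity upgrades this to uniform convergence on every compact interval.

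The crux is clearly steps~(a)–(b): the uniform control of the functions $u_{\rho_n}$, hence of the boundary values of $F_{\rho_n}$, near the exceptional set $\partial A_\nu$ where $H_\nu'$ may vanish and $F_\nu$ has a vertical tangent, so that the analytic‑continuation machinery of Lemma~\ref{lemma:2.7} is not directly available. This is precisely where the $\boxplus$‑infinite divisibility of the $\rho_n$ must be exploited in an essential way — through the inequality $\Im H_{\rho_n}(z)\leq\Im z$, the description of $\gamma_{\rho_n}=\partial\Omega_{\rho_n}$ as a level set of $\Im H_{\rho_n}$, and the uniform $2$‑Lipschitz bound on $H_{\rho_n}$ — none of which is available for the original, merely absolutely continuous measures $\mu_n^{\boxplus k_n}$ themselves.
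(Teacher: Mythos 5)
Your reduction to the auxiliary measures $\rho_n$, the tail estimate via \eqref{eq:2.2}, and the treatment of the set where the density of $\nu$ is bounded below all match the paper. But your argument hinges on a claim that is substantially stronger than what the paper proves, namely that $F_{\rho_n}\to F_\nu$ uniformly on compact subsets of $\mathbb{R}$, and the justification you give for it has a concrete flaw. In step (b) you deduce equicontinuity of the boundary restrictions $F_{\rho_n}|_{\mathbb{R}}$ from the bound $|H_{\rho_n}(w_1)-H_{\rho_n}(w_2)|\leq 2|w_1-w_2|$. That Lipschitz bound on $H_{\rho_n}$ yields exactly the \emph{reverse} inequality for the inverse map, i.e.\ the expansiveness $|F_{\rho_n}(t_1)-F_{\rho_n}(t_2)|\geq \tfrac12|t_1-t_2|$ of \eqref{eq:2.2}; it gives no upper bound on increments of $F_{\rho_n}$ and hence no equicontinuity. (Equicontinuity of the family is plausible but would have to come from elsewhere, e.g.\ from the monotonicity of $\Re F_{\rho_n}$ combined with pointwise convergence to the continuous limit $\Re F_\nu$ -- which is itself part of what you are trying to prove.) Two further gaps: in step (a) your subsequential-limit argument only handles limits $x_*+iy_*$ with $y_*>0$; the case $y_*=0$ while $u_\nu(x_*)>0$ is not covered, since $H_\nu$ need not extend continuously to $x_*+i0\notin\overline{\Omega_\nu}$, and ruling it out requires a separate integral estimate with the measures $\sigma_n$. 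And in step (c) the density in $\mathbb{R}$ of the set of ``good'' points $t$ is asserted but not obvious: $A_\nu$ need not be dense (e.g.\ for the semicircle law it is a single bounded interval), so you must also argue that $H_\nu$ is real-analytic with $H_\nu'>0$ on a dense subset of $\xi^{-1}(\mathrm{int}\,B_\nu)$, which requires showing $\sigma$ puts no mass on any interval contained in $B_\nu$.

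The paper avoids all of this by never proving uniform convergence of $F_{\rho_n}$ on the ``bad'' set. It fixes $\delta>0$, splits $[-M,M]$ according to whether $u_\nu\geq\delta/2$ or $u_\nu\leq\delta$, and on the latter set proves only the \emph{one-sided} bound $\sup_{J'}u_{\rho_n}\leq 2\delta$ (its inequality \eqref{eq:4.3}, by exactly the kind of $\sigma_n\to\sigma$ compactness argument your step (a) gestures at) together with a lower bound on $|F_{\rho_n}|$ there; this makes both $s_n$ and $s$ of order $\delta$ on that set, which suffices since $\delta$ is arbitrary. If you want to keep your architecture you must either supply a correct proof of equicontinuity (or of locally uniform convergence of the monotone functions $\Re F_{\rho_n}$), or retreat to the paper's two-sided smallness argument on the set where $u_\nu$ is small.
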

\begin{remark}
\emph{The condition that $0\in F_{\nu}(\mathbb{R})$ 
is necessary for $\nu$ to have an atom, but it is not sufficient (see Proposition 5.1).
If $F_{\nu}(t_{\nu})=0$, then the function $\g{\nu}$ 
extends continuously to all points $t\in\mathbb{R}\setminus \{t_{\nu}\}$.
Theorem 1.1 follows from Theorem 4.1 and
this observation.}
\end{remark}
\begin{proof}
As seen earlier, there exist measures
$\rho_n\in \mathcal{ID}(\boxplus)$ satisfying
   \begin{equation}\nonumber 
      F_{\mu_n^{\boxplus k_n}}(z)=F_{\mu_n}(F_{\rho_n}(z)), \quad z\in \mathbb{C}^+.
   \end{equation}
To each $n$, denote by $s_n$ and $s$ the density of the absolutely continuous part of $\mu_n^{\boxplus k_n}$ and that of $\nu$, respectively.
Relation (3.1) shows that
$|F_{\mu_n^{\boxplus k_n}}-F_{\rho_n}|$ is small
relative to $|F_{\rho_n}|$. Therefore, it suffices to focus on the asymptotic behavior of $\f{\rho_n}$.

Given $\varepsilon>0$, we first prove that there exists $M>0$ such that $\left|s_n(t)-s(t)\right|< \varepsilon$ for $|t|>M$ and for sufficiently large $n$. 
Since the measures $\rho_n$ converge weakly to $\nu$ by Proposition 3.2, we have $|F_{\rho_n}(i)| \rightarrow |F_{\nu}(i)|$ as $n\rightarrow \infty$. In the sequel, we shall only consider the integers $n$ which satisfy the following two conditions: \[k_n>13 \quad \text{and} \quad 9|F_{\nu}(i)|>6|F_{\rho_{n}}(i)|.\] Applying the estimate (2.4) to $\rho_{n}$, we have $|F_{\rho_n}(t)|> |t|/3$ for all such $n$ and for $|t| > 9|F_{\nu}(i)|$. It follows from (3.1) that
$|F_{\mu_n^{\boxplus k_n}}(t)|>|t|/4$ for the same $n$ and $t$. Combining this with another application of (2.4) to the density $s$, we get    
    \begin{equation}\label{eq:4.1}
       \left|s_n(t)-s(t)\right|< \frac{7}{\pi}\frac{1}{|t|},\quad |t| > 9|F_{\nu}(i)|, 
    \end{equation} for these large $n$. Therefore, the desired cutoff constant $M$ can be chosen as \[M=\max\{ 9|F_{\nu}(i)|,7/\varepsilon\pi\}.\] 
    
    We conclude that it suffices to prove the uniform convergence of $s_n$ to $s$ on a set 
of the form $I\setminus U$, where $I=[-M,M]$. To this purpose, fix then $I=[-M,M]$ with $M>0$, and let $\delta >0$ be arbitrary but fixed. Recall that the map
\[
 t\mapsto \Re F_{\nu}(t)
\]
is an increasing homeomorphism of $\mathbb{R}$.
Thus, the set 
\[
\begin{split}
 J &=\{x\in \mathbb{R}: x\in \Re F_{\nu}(I) \}\\
 &=\{x\in \mathbb{R}:\Re F_{\nu}(-M)\leq x \leq \Re F_{\nu}(M) \}
 \end{split}
\]
is a compact interval. Set 
\[
\Gamma=\{x\in J: u_{\nu}(x)\geq\delta \}
\]
and 
\[
\Delta=\{x\in J:u_{\nu}(x)>\delta/2\}.
\]
We have $\Gamma\subset\Delta\subset J$, $\Gamma$ is closed,
and $\Delta$ is relatively open in $J$. We conclude that $\Gamma$
is contained in the union of finitely many connected components of $\Delta$. Taking the closure of those components, we find a finite family
$J_1,J_2,\cdots,J_K$ of pairwise disjoint, closed intervals such that
\[
\Gamma\subset \bigcup_{1 \leq \ell \leq K} J_{\ell} \subset \overline{\Delta}.
\]
We have $u_{\nu}\geq\delta/2$ on the union $\bigcup_{_{1 \leq \ell \leq K}}J_{\ell}$ and 
$u_{\nu}\leq \delta$ on the complement 
$J'=J\setminus (\bigcup_{_{1\leq \ell \leq K}}J_{\ell})$.

Denote 
$I_{\ell}=\{t\in I: \Re F_{\nu}(t)\in J_{\ell} \}$ for each 
$1\leq \ell \leq K$.
Note that 
\[
\Im F_{\nu}(t)\geq\delta/2
\]
for each $t\in \bigcup_{_{1\leq\ell\leq K}}I_{\ell}$.
Thus, the density of $\nu$ is bounded away from zero
on $\bigcup_{_{1\leq\ell\leq K}}I_{\ell}$.
From Lemma 2.7, we see that 
the functions $F_{\nu}$ 
and $F_{\rho_n}$ both extend analytically to a neighborhood of the set $\bigcup_{_{1\leq\ell\leq K}}I_{\ell}$ for sufficiently large $n$. These extensions are injective. Moreover, the convergence $F_{\rho_n}\rightarrow F_{\nu}$ holds uniformly in that neighborhood. By virtue of (3.1), we conclude that the functions $F_{\mu_n^{\boxplus k_n}}$ will have the same behavior on the set $\bigcup_{_{1\leq\ell\leq K}}I_{\ell}$ as $n \rightarrow \infty$. It follows that the measure $\mu_n^{\boxplus k_n}$ has no atom in the union $\bigcup_{_{1\leq\ell\leq K}}I_{\ell}$ for large $n$ and $s_n \rightarrow s$ uniformly on this set by the Stieltjes inversion formula. 

We prove next the uniform convergence on the set $I'$ (or on $I' \setminus U$), where
\begin{equation}\label{eq:4.2}
I'=\{t\in I: \Re F_{\nu}(t) \in J' \}
=I\setminus \left(\bigcup_{_{1\leq\ell\leq K}}I_{\ell}\right).
\end{equation}
We claim that 
\begin{equation}\label{eq:4.3}
\sup_{x\in J'}u_{\rho_n}(x)\leq 2\delta
\end{equation}
for sufficiently large $n$. Assume, to get a contradiction, that there exist positive integers $n_1<n_2<\cdots\rightarrow \infty$ 
and points $x_{1},x_{2},\cdots\in J'$ such that $u_{\rho_{n_k}}(x_{k})> 2\delta$. 
By the definition of $u_{\rho_{n}}$ given in Section 2, we have
   \begin{equation}\label{eq:4.4}
      \int_{\mathbb{R}}\frac{1+t^2}{(t-x_{k})^2+u_{\rho_{n_k}}(x_{k})^2}\,d\sigma_{n_k}(t)=1, \quad k\geq 1, 
   \end{equation} where $\sigma_{n_k}$ is the free generating measure of $\rho_{n_k}$. 
By passing to a subsequence if necessary, we assume that $x_{k}\rightarrow x_0\in\overline{J'}$ as $k\rightarrow \infty$.
Then, denoting $\nu=\nu_{\boxplus}^{\gamma,\sigma}$, the identity (4.4) and the fact that $\sigma_n \rightarrow \sigma$ weakly imply that \[1\leq \int_{\mathbb{R}}\frac{1+t^2}{(t-x_{k})^2+(2\delta)^2}\,d\sigma_{n_k}(t)
       \rightarrow \int_{\mathbb{R}}\frac{1+t^2}{(t-x_0)^2+(2\delta)^2}\,d\sigma(t)\]
as $k\rightarrow\infty$.
We conclude that $u_{\nu}(x_0)\geq 2\delta$, which is in contradiction to the fact that $x_0\in \overline{J'}$. Thus, the estimate (4.3) is proved. 

The rest of the proof is divided into two cases according to whether 
$U=\varnothing$ or $U\neq \varnothing$. 
By translating the measure $\nu$ if necessary, we may assume that $\Re F_{\nu}(0)=0$. 

Case $(1)$: $0\notin F_{\nu}(\mathbb{R})$ and $U=\varnothing$. In this case, $u_{\nu}(0)>0$ and thus $0\in A_{\nu}$. Since the set $A_{\nu}$ is open, there exists a small number $a>0$ such that the interval $[-4a,4a]$ is contained in $A_{\nu}$. By considering a smaller $\delta$ if necessary, we may assume further that 
    \begin{equation}\label{eq:4.5}
       [-4a,4a ]\subset \bigcup_{1\leq\ell\leq K}J_{\ell}.
    \end{equation}
Since the map $t\mapsto \Re F_{\nu}(t)$ is an increasing homeomorphism of $\mathbb{R}$, the uniform convergence of 
$F_{\rho_n}\rightarrow F_{\nu}$
on $\bigcup_{_{1\leq\ell\leq K}}I_{\ell}$ implies that there exists some integer $N>0$ such that \[\left[-2a,2a\right] \subset
       \left\{\Re F_{\rho_n}(t):t\in \bigcup_{_{1\leq\ell\leq K}}I_{\ell} \right\}, \quad n\geq N.\] Since the map $t\mapsto\Re F_{\rho_n}(t)$ is also
a homeomorphism of the same nature, we have
    \begin{equation}\nonumber
       \inf_{t\in I'}|\Re F_{\rho_n}(t)|\geq 2a, \quad n \geq N, 
    \end{equation}
by recalling the definition (4.2) of the complement $I^{\prime}$. 
Using (3.1) and enlarging $N$ if necessary,
we conclude that
    \begin{equation}\label{eq:4.6}
       \inf_{t\in I'}|\Re F_{\mu_n^{\boxplus k_n}}(t)|\geq a, \quad n \geq N. 
    \end{equation} Further enlarging $N$,
the inequality (4.3) and  the relation (3.1)  imply that
    \begin{equation}\label{eq:4.7}
       \Im F_{\mu_n^{\boxplus k_n}}(t) \leq 3\delta, \quad t\in I', \quad n\geq N.
    \end{equation} From (4.6) and (4.7), we see that
\[
0\leq s_n(t)\leq \frac{3\delta}{a^2 \pi}
\]
for $t\in I'$ and $n \geq N$. On the other hand, the relation (4.5)
and the fact that $u_{\nu}\leq \delta$ on $J'$ yield
\[
0\leq s(t)\leq \frac{\delta}{16a^2 \pi}
\] 
for $t\in I'$. As the parameter $\delta$ can be arbitrarily small, we have proved the uniform convergence
of $s_n\rightarrow s$ on $I'$.
This finishes the proof of Part (1).

Case $(2)$: $0\in F_{\nu}(\mathbb{R})$. In this case, $u_{\nu}(0)=0$ and $F_{\nu}(0)=0=H_{\nu}(0)$ by our normalization.
Let $a_n$ be the unique real value such that $\Re F_{\rho_n}(a_n)=0$ (and hence $F_{\rho_n}(a_n)=iu_{\rho_{n}}(0)$). 
We first show that $a_n$ is small for large $n$. 
Toward this end, we write $U=(-2b,2b)$ where $b>0$ and set $c=b/5$. Observe that
    \begin{equation}\nonumber
       \lim_{n \rightarrow \infty}H_{\rho_n}(ic)=H_{\nu}(ic)\in \mathbb{C}^{+}
    \end{equation}
and
\begin{equation}\nonumber
    |H_{\nu}(ic)|= |H_{\nu}(ic)-H_{\nu}(0)|\leq 2c. 
\end{equation} Since the domain $\Omega_{\rho_{n}}=\{z\in \mathbb{C}^{+}:H_{\rho_{n}}(z) \in \mathbb{C}^{+}\}$, we conclude that exists an integer $N>0$ such that $ic\in \Omega_{\rho_n}$ for all $n \geq N$. Consequently, we have $u_{\rho_n}(0)<c$ for such $n$. Observe that \[|H_{\rho_n}(ic)-a_n|=|H_{\rho_n}(ic)-H_{\rho_n}(iu_{\rho_n}(0))|\leq 2(c-u_{\rho_n}(0))\leq 2c\] for all $n \geq N$. (Notice that we have used the inversion relationship $a_n=H_{\rho_n}\left(F_{\rho_{n}}(a_n)\right)$ here.) Therefore,  by enlarging $N$ if necessary, we conclude that  
$|a_n|<5c=b$ for $n \geq N$.

Now, (2.2) shows that for any $t \in I' \setminus U$ and $n \geq N$, we have \[|F_{\rho_n}(t)-F_{\rho_n}(a_n)|\geq \frac{1}{2}|t-a_n|>\frac{b}{2}.\] This implies further that
\[
 |F_{\rho_n}(t)|>\frac{b}{2}-|F_{\rho_n}(a_n)|=\frac{b}{2}-|u_{\rho_n}(0)|> 
 \frac{b}{4}, \quad t \in I' \setminus U, \quad n \geq N.
\] In other words, for such values of $t$ and $n$, $ |F_{\rho_n}(t)|$ is always bounded away from zero.  
Then an argument similar to the proof of Case (1) yields the absolute continuity of the free convolution $\mu_n^{\boxplus k_n}$ and the uniform convergence $s_n\rightarrow s$ on
$I' \setminus U$, finishing the proof of Part (2).

Finally, the $L^{p}$-convergence result in Part (3) follows from the estimate (4.1) and the dominated convergence theorem.  
\end{proof}
\begin{remark} [Local analyticity and approximation]
   \emph{An important feature of superconvergence is the analyticity properties of the distributions in the limiting process. Indeed, under the weak convergence assumption of Theorem 4.1, if $I$ is a finite interval on which the limit density $d\nu/dx$ is bounded away from zero (and hence it admits an analytic continuation to a neighborhood of $I$), then the restriction of the free convolution $\mu_n^{\boxplus k_n}$ on $I$ becomes absolutely continuous in finite time and its density continues analytically to a neighborhood of $I$. Moreover, these extensions can be approximated uniformly by the analytic continuation of $d\nu/dx$ on $I$, thanks to Lemma 2.7 and the identity (3.1).}
\end{remark}

\section{applications}
In this section, we apply our main result to some of the most important limit theorems in free probability. We begin by examining the geometric condition: $0 \in F_{\nu}\left(\mathbb{R}\right)$. Note that the singular integral in the following result takes values in $(0,\infty]$.
\begin{prop} Let $\nu=\nu_{\boxplus}^{\gamma, \sigma}$ be a nondegenerate law in $\mathcal{ID}(\boxplus)$. We have: 
\begin{enumerate} [$(1)$]
\item $0 \in F_{\nu}\left(\mathbb{R}\right)$ if and only if \begin{equation}\label{eq:5.1} L=\sup_{\varepsilon > 0}\frac{-\Im \varphi_{\nu}(i\varepsilon)}{\varepsilon} =\int_{\mathbb{R}}\frac{1+t^2}{t^2}\,d\sigma(t) \leq 1.\end{equation} In this case, the value of the unique zero $t_{\nu}$ of $F_{\nu}$ is given by \[t_{\nu}=\gamma-\int_{\mathbb{R}}\frac{1}{t}\,d\sigma(t).\] 
\item $\nu\left(\{t_{\nu}\}\right)>0$ if and only if $L<1$, and we have $\nu\left(\{t_{\nu}\}\right)=1-L$ in this case.
\end{enumerate} 
\end{prop}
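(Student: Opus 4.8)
The plan for Part~(1) is first to identify the quantity $L$ with the function $g$ of Section~2 evaluated at the origin. A direct computation of the Nevanlinna integral gives $\Im\varphi_{\nu}(i\varepsilon)=-\varepsilon\int_{\mathbb{R}}(1+t^{2})/(t^{2}+\varepsilon^{2})\,d\sigma(t)$, so that $-\Im\varphi_{\nu}(i\varepsilon)/\varepsilon$ increases, as $\varepsilon\downarrow 0$, to $\int_{\mathbb{R}}(1+t^{2})/t^{2}\,d\sigma(t)=g(0)$ by monotone convergence; hence $L=g(0)$. Since $F_{\nu}(\mathbb{R})$ is exactly the graph $\gamma_{\nu}$ of $u_{\nu}$ by Proposition~\ref{prop:2.3}, the point $0$ lies in $F_{\nu}(\mathbb{R})$ if and only if $u_{\nu}(0)=0$, that is, if and only if $0\notin A_{\nu}$, i.e. $g(0)\le 1$; this gives the asserted equivalence. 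When $L=g(0)\le 1$ I would observe that $\Im H_{\nu}(iy)=y\bigl(1-\int_{\mathbb{R}}(1+t^{2})/(t^{2}+y^{2})\,d\sigma(t)\bigr)>0$ for every $y>0$, so $iy\in\Omega_{\nu}$ and therefore $0\in\overline{\Omega_{\nu}}$; the inversion relation $F_{\nu}(H_{\nu}(z))=z$ on $\overline{\Omega_{\nu}}$ then forces $t_{\nu}=H_{\nu}(0)$. Finiteness of $g(0)$ forces $\sigma(\{0\})=0$ and, by Cauchy--Schwarz, $\int_{\mathbb{R}}|t|^{-1}\,d\sigma(t)<\infty$; combined with the estimate $\bigl|(1+tiy)/(iy-t)+1/t\bigr|=y(1+t^{2})/(|t|\,|iy-t|)\le y(1+t^{2})/t^{2}$ and dominated convergence, this yields $H_{\nu}(0)=\lim_{y\to 0^{+}}H_{\nu}(iy)=\gamma-\int_{\mathbb{R}}t^{-1}\,d\sigma(t)$, the stated formula for $t_{\nu}$.

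For Part~(2) the plan is to start from the atom formula \eqref{eq:2.1}: writing $G_{\nu}=1/F_{\nu}$, one has $\nu(\{t_{\nu}\})=\lim_{y\to 0^{+}}iy\,G_{\nu}(t_{\nu}+iy)=\lim_{y\to 0^{+}}iy/F_{\nu}(t_{\nu}+iy)$. Set $w_{y}=F_{\nu}(t_{\nu}+iy)$, so that $w_{y}\in\Omega_{\nu}$ and $w_{y}\to F_{\nu}(t_{\nu})=0$ by continuity of $F_{\nu}$ on $\mathbb{C}^{+}\cup\mathbb{R}$. The inversion $H_{\nu}(w_{y})=t_{\nu}+iy$ together with $t_{\nu}=H_{\nu}(0)$ from Part~(1), and a short manipulation using the L\'evy--Khintchine form of $H_{\nu}$, gives
\[
\frac{iy}{F_{\nu}(t_{\nu}+iy)}=\frac{H_{\nu}(w_{y})-H_{\nu}(0)}{w_{y}}=1+\int_{\mathbb{R}}\frac{1+t^{2}}{t(w_{y}-t)}\,d\sigma(t).
\]
Everything thus reduces to proving that $\int_{\mathbb{R}}(1+t^{2})/(t(w-t))\,d\sigma(t)\to -g(0)$ as $w\to 0$ inside $\Omega_{\nu}$; granting this, $\nu(\{t_{\nu}\})=1-g(0)=1-L$, which is strictly positive precisely when $L<1$.

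This last convergence is the step I expect to be the main obstacle, since the integrand blows up near $t=\Re w$ and plain dominated convergence is unavailable when $w$ approaches the support of $\sigma$. The plan is to split the integral at $|t|=\delta$. On $\{|t|>\delta\}$, once $|w|<\delta/2$ we have $|w-t|\ge |t|/2$, so the integrand is dominated by $2(1+t^{2})/t^{2}$, which is $\sigma$-integrable with integral $2g(0)<\infty$, and dominated convergence identifies the contribution as $-\int_{|t|>\delta}(1+t^{2})/t^{2}\,d\sigma(t)$. On $\{|t|\le\delta\}$ the crucial observation is that membership of $w$ in $\Omega_{\nu}$ is exactly the inequality $\Im H_{\nu}(w)>0$, i.e. $\int_{\mathbb{R}}(1+t^{2})/|w-t|^{2}\,d\sigma(t)<1$; Cauchy--Schwarz then bounds $\int_{|t|\le\delta}(1+t^{2})/(|t|\,|w-t|)\,d\sigma(t)$ by $\bigl(\int_{|t|\le\delta}(1+t^{2})/t^{2}\,d\sigma(t)\bigr)^{1/2}$, which tends to $0$ as $\delta\to 0$ since $g(0)<\infty$ and $\sigma(\{0\})=0$. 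Letting first $w\to 0$ and then $\delta\to 0$ yields the claim. (Equivalently, this computes the Julia--Carath\'eodory derivative $F_{\nu}'(t_{\nu})=1/(1-L)\in(0,+\infty]$, in accordance with the identity $\nu(\{t_{\nu}\})=1/F_{\nu}'(t_{\nu})$ recalled in Section~2.)
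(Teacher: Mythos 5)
Your proof is correct, and it reaches the result by a route that is partly different from the paper's. The identification $L=\int_{\mathbb{R}}(1+t^2)t^{-2}\,d\sigma(t)$ via monotone convergence and the dominated-convergence computation of $\lim_{y\to 0^+}H_\nu(iy)=\gamma-\int_{\mathbb{R}}t^{-1}\,d\sigma(t)$ coincide with what the paper does. Where you diverge: for the equivalence in Part (1), the paper invokes Proposition 4.7 of \cite{BB2005} (Denjoy--Wolff/Julia--Carath\'eodory analysis: $0\in F_{\nu}(\mathbb{R})$ if and only if the vertical limit $t_{\nu}=H_{\nu}(0)$ exists, is real, and $H_{\nu}'(0)\geq 0$) and then computes $H_{\nu}'(0)=1-L$; you instead read the condition off the graph description $F_{\nu}(\mathbb{R})=\gamma_{\nu}$ of Proposition \ref{prop:2.3}, which reduces it immediately to $u_{\nu}(0)=0$, i.e.\ $g(0)=L\leq 1$. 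For Part (2), the paper simply cites the identities $\nu(\{t_{\nu}\})=1/F_{\nu}'(t_{\nu})$ and $F_{\nu}'(t_{\nu})=1/H_{\nu}'(0)$, whereas you compute $\lim_{y\to 0^{+}}iy\,G_{\nu}(t_{\nu}+iy)$ by hand, the key step being the Cauchy--Schwarz bound on $\{|t|\leq\delta\}$ that exploits $\int_{\mathbb{R}}(1+t^{2})|w-t|^{-2}\,d\sigma(t)<1$ for $w\in\Omega_{\nu}$ --- in effect a self-contained verification that the vertical derivative $1-L$ really is the Julia--Carath\'eodory derivative governing the atom. Your version is longer but more elementary, using only the material already set up in Section 2 (Propositions \ref{prop:2.2}--\ref{prop:2.3} and the inversion relations on $\overline{\Omega_{\nu}}$); the paper's is shorter at the cost of importing the boundary fixed-point results of \cite{BB2005}. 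One small point worth making explicit in your write-up: the strict inequality $\Im H_{\nu}(iy)>0$ for $y>0$, needed to place $iy$ in $\Omega_{\nu}$, uses both that $\sigma\neq 0$ (nondegeneracy) and that $\sigma(\{0\})=0$ (forced by $L<\infty$), so that $\int_{\mathbb{R}}(1+t^{2})/(t^{2}+y^{2})\,d\sigma(t)<g(0)\leq 1$ holds strictly.
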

\begin{proof}The identity \[\sup_{\varepsilon > 0}(-\Im \varphi_{\nu}(i\varepsilon))/\varepsilon =\int_{\mathbb{R}}\frac{1+t^2}{t^2}\,d\sigma(t)\] follows from the free L\'{e}vy-Khintchine formula \[-\Im \varphi_{\nu}(i\varepsilon)=\varepsilon \int_{\mathbb{R}}\frac{1+t^2}{\varepsilon^{2}+t^2}\,d\sigma(t)\] and the monotone convergence theorem, and we see that the supremum here is in fact a genuine limit: \[\sup_{\varepsilon > 0}(-\Im \varphi_{\nu}(i\varepsilon))/\varepsilon=\lim_{\varepsilon \rightarrow 0^{+}}(-\Im \varphi_{\nu}(i\varepsilon))/\varepsilon.\]

Next, recall from Proposition 4.7 in \cite{BB2005} that $0 \in F_{\nu}\left(\mathbb{R}\right)$ if and only if the limit \[t_{\nu} =H_{\nu}(0)=\lim_{\varepsilon \rightarrow 0^{+}}H_{\nu}(i\varepsilon)\] exists, $t_{\nu} \in \mathbb{R}$, and the Julia-Carath\'eodory derivative $H'_{\nu}(0) \geq 0$. Note that if the limit $t_{\nu}$ exists and is real, then the derivative \begin{equation}\label{eq:5.2} H'_{\nu}(0)=\lim_{\varepsilon \rightarrow 0^{+}}\frac{\Im H_{\nu}(i\varepsilon)}{\varepsilon}\end{equation} always exists and belongs to the interval $ [-\infty,1)$. Moreover, if $0 \in F_{\nu}\left(\mathbb{R}\right)$ and $H'_{\nu}(0) >0$ then we have the Julia-Carath\'eodory derivative $F'_{\nu}\left(t_{\nu} \right)=1/H'_{\nu}(0)$.

Now, if $0 \in F_{\nu}\left(\mathbb{R}\right)$, then we know the limit $t_{\nu} \in \mathbb{R}$. Hence, (5.2) implies  $H'_{\nu}(0)=1-L$. Since $H'_{\nu}(0) \geq 0$ in this case, we conclude that $1 \geq L$. On the other hand, since $F_{\nu}\left(\mathbb{R}\right)=\partial \Omega_{\nu}$, the inversion formula shows that \[F_{\nu}\left(t_{\nu}\right)=F_{\nu}\left(H_{\nu}(0)\right)=0.\]

Conversely, if the singular integral $L$ converges and $1 \geq L$, then we have $\Im H_{\nu}(i\varepsilon) \rightarrow 0\cdot(1-L)=0$ as $\varepsilon \rightarrow 0^{+}$. On the other hand, the estimate\[\frac{|t|}{\varepsilon^2+t^2} \leq \frac{1+t^2}{\varepsilon^2+t^2}\leq \frac{1+t^2}{t^2} \in L^{1}(\sigma), \quad t \in \mathbb{R}, \quad \varepsilon >0,\]  and the dominated convergence theorem imply that the function $t \mapsto 1/t$ belongs to $L^{1}(\sigma)$ and \[\Re H_{\nu}(i\varepsilon)=\gamma+(\varepsilon^2 -1)\int_{\mathbb{R}}\frac{t}{\varepsilon^{2}+t^2}\,d\sigma(t) \rightarrow \gamma-\int_{\mathbb{R}}\frac{1}{t}\,d\sigma(t)\] as $\varepsilon \rightarrow 0^{+}$. It follows that the vertical limit \[t_{\nu}=\gamma-\int_{\mathbb{R}}\frac{1}{t}\,d\sigma(t) \in \mathbb{R}.\] As seen earlier, this fact and the formula (5.2) imply that $H'_{\nu}(0)=1-L$. Therefore, we have $H'_{\nu}(0) \geq 0$, and the proof of Part (1) is finished. 

Part (2) follows from the fact that the derivative $F'_{\nu}\left(t_{\nu} \right)=1/{\nu}\left(\{t_{\nu}\}\right)$.
\end{proof} We remark that the results in \cite{BB2005} were proved using Denjoy-Wolff analysis for boundary fixed points of analytic self-maps on $\mathbb{C}^{+}$. A different approach to the same results has been done in \cite{HW}, which yields a more general description for the points on the boundary set $\partial \Omega_{\nu}$. 

\subsection{Stable approximation} Recall that two measures $\mu,\nu \in \mathcal{M}$ are said to have the same \emph{type} (and we write $\mu \sim \nu$) if there exist constants $a>0$ and $b \in \mathbb{R}$ such that $\mu\left(E\right)=\nu\left(aE+b\right)$ for all Borel sets $E \subset \mathbb{R}$. The relation $\sim$ is an equivalence relationship among all probability laws, and hence the set $\mathcal{M}$ is partitioned into a union of distributions with inequivalent types. A nondegenerate distribution $\nu \in \mathcal{M}$ is said to be \emph{$\boxplus$-stable} if $\nu \sim \nu_{1} \boxplus \nu_{2}$ whenever $\nu_{1}\sim\nu\sim\nu_{2}$. Clearly, within one type either all distributions are stable or else none of them is stable.
 
Each $\boxplus$-stable law $\nu$ is associated with a unique  \emph{stability index} $\alpha \in (0,2]$, so that if $X$ and $Y$ are free random variables drawn from the same law $\nu$ and $a,b >0$, then the distribution of the sum $aX+bY$ is a translate of the distribution of the scaled variable $(a^{\alpha}+b^{\alpha})^{1/\alpha}X$. Apparently, all stable laws of the same type must share the same index. 

Freely stable laws are $\boxplus$-infinitely divisible and absolutely continuous, and they can be classified using the stability index $\alpha$. Following \cite{BPata1999}, every $\boxplus$-stable law has the same type as a unique distribution whose Voiculescu transform falls into the following list:
\begin{enumerate}
\item $\varphi(z)=1/z$ for $\alpha=2$;
\item $\varphi(z)=bz^{1-\alpha}$ for $1<\alpha<2$, where $|b|=1$ and $\arg b \in [(\alpha-2)\pi,0]$;
\item $\varphi(z)=bz^{1-\alpha}$ for $0<\alpha<1$, where $|b|=1$ and $\arg b \in [\pi, (1+\alpha)\pi]$;
\item $\varphi(z)=-2bi+[2(2b-1)/\pi]\log z$ for $\alpha=1$, where $b \in [0,1]$.
\end{enumerate} 
Here, the complex power and logarithmic functions are given by their principal value in $\mathbb{C}^{+}$. One can also find a formula for the density of the $\boxplus$-stable laws in \cite{BPata1999}. Among all, we mention that the case $\alpha=2$ corresponds to the stable type of the standard semicircular law.

The interest in the class of freely stable laws arises from the fact that a measure $\nu$ is $\boxplus$-stable if and only if there exist a sequence $\{X_i\}_{i=1}^{\infty}$ of identically distributed free random variables and constants $a_n>0$ and $b_n\in \mathbb{R}$ such that the distribution of the normalized sum $S_n=\sum_{i=1}^{n}(X_{i}-b_n)/a_n$ converges weakly to the law $\nu$. In this case, the common distribution of the sequence $X_i$ is said to belong to the \emph{free domain of attraction} of the stable law $\nu$. Thus, up to a change of scale and location, the distributional behavior of a large free convolution $\mu^{\boxplus n}$ for a measure $\mu$ in a free domain of attraction can be estimated using the corresponding freely stable law. 

Free domains of attraction for $\boxplus$-stable laws are determined in \cite{BPata1999}, showing that these domains of attraction coincide with their classical counterparts relative to the classical convolution. In the semicircular case, the free domain of attraction consists of all nondegenerate measures $\mu \in \mathcal{M}$ such that the truncated variance function \[H_{\mu}(x)=\int_{-x}^{x}t^2\,d\mu(t), \quad x>0,\] satisfies $\lim_{x \rightarrow \infty}H_{\mu}(cx)/H_{\mu}(x)=1$ for any given $c>0$. This is in parallel to the classical theory of central limit theorems, that is, convergence to a Gaussian law.   

With that being said, the following result shows that the quality of freely stable approximation is in fact much better than its classical counterpart. This result is stated in the general framework of triangular arrays with identical rows. 
\begin{prop} Let $\nu$ be a $\boxplus$-stable law for which the weak approximation $\mu_n^{\boxplus k_n} \rightarrow \nu$ holds. Then the measure $\mu_n^{\boxplus k_n}$ superconverges to the law $\nu$ on $\mathbb{R}$. 
\end{prop}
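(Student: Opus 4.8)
The plan is to derive this from Theorem \ref{thm:4.1} together with Proposition 5.1. Every freely stable law $\nu$ is nondegenerate and $\boxplus$-infinitely divisible, so whenever $\mu_n^{\boxplus k_n}\to\nu$ with $k_n\to\infty$, Theorem \ref{thm:4.1} applies; in the case $0\notin F_\nu(\mathbb{R})$ its conclusion is exactly the global superconvergence claimed here — absolute continuity of $\mu_n^{\boxplus k_n}$ with a continuous density on $\mathbb{R}$ for large $n$, together with uniform and $L^p$-convergence ($p>1$) of the densities to the density of $\nu$ over all of $\mathbb{R}$, with no exceptional neighborhood. Thus the entire task reduces to showing that a freely stable law has no singular point, i.e. $0\notin F_\nu(\mathbb{R})$.

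Write $\nu=\nu_{\boxplus}^{\gamma,\sigma}$, and for a nondegenerate $\mu\in\mathcal{ID}(\boxplus)$ set $L_\mu=\lim_{\varepsilon\to0^+}(-\Im\varphi_\mu(i\varepsilon))/\varepsilon$, a number in $(0,\infty]$ since $\sigma\neq0$ for such $\mu$. By Proposition 5.1, $0\notin F_\nu(\mathbb{R})$ is equivalent to $L_\nu>1$, so it is enough to prove $L_\nu=\infty$. The decisive point is that $L_\mu$ depends only on the type of $\mu$: a translation leaves $\Im\varphi_\mu$ unchanged, while if $\widetilde\mu$ is the push-forward of $\mu$ under $t\mapsto ct$ with $c>0$, then $\varphi_{\widetilde\mu}(z)=c\,\varphi_\mu(z/c)$, whence
\[
\frac{-\Im\varphi_{\widetilde\mu}(i\varepsilon)}{\varepsilon}=\frac{-\Im\varphi_\mu(i\varepsilon/c)}{\varepsilon/c}\longrightarrow L_\mu\qquad(\varepsilon\to0^+).
\]
Now $\boxplus$-stability gives $\nu\boxplus\nu\sim\nu$. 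The identity $\varphi_{\nu\boxplus\nu}=2\varphi_\nu$, valid a priori on a truncated cone, is inherited by the analytic extension of $2\varphi_\nu$ to $\mathbb{C}^+$; hence $\nu\boxplus\nu$ again lies in $\mathcal{ID}(\boxplus)$, is nondegenerate, and satisfies $\varphi_{\nu\boxplus\nu}(i\varepsilon)=2\varphi_\nu(i\varepsilon)$, so $L_{\nu\boxplus\nu}=2L_\nu$. Combining this with $L_{\nu\boxplus\nu}=L_\nu$ (from the type-invariance just established) yields $2L_\nu=L_\nu$, which, because $L_\nu>0$, forces $L_\nu=\infty$.

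Therefore $0\notin F_\nu(\mathbb{R})$ for every freely stable $\nu$, and Theorem \ref{thm:4.1} delivers the statement. The one step that goes beyond routine bookkeeping is the self-similarity argument giving $L_\nu=\infty$: its role is precisely to strengthen the inequality $L_\nu\ge1$ — all that ruling out an atom would provide — into the strict inequality $L_\nu>1$ required by Proposition 5.1. As an alternative one could instead compute $L_\nu=\lim_{\varepsilon\to0^+}(-\Im\varphi_\nu(i\varepsilon))/\varepsilon=\infty$ separately for each of the four normal forms of $\varphi_\nu$ recorded above, but the type-invariance argument makes the case analysis unnecessary.
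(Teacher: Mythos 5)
Your proposal is correct, and while it shares the paper's overall skeleton (reduce to the criterion of Proposition 5.1 and then invoke Theorem 4.1, case (1)), it establishes the key fact $L=\infty$ by a genuinely different route. The paper simply asserts that $L=\int_{\mathbb{R}}\frac{1+t^2}{t^2}\,d\sigma(t)=\infty$ ``in all cases of the index $\alpha$,'' which amounts to inspecting the four normal forms of $\varphi_{\nu}$ listed in Section 5.1. You instead avoid the classification entirely: you observe that the quantity $L_{\mu}=\lim_{\varepsilon\to0^+}(-\Im\varphi_{\mu}(i\varepsilon))/\varepsilon$ is an invariant of the type (your computations $\varphi_{\widetilde\mu}(z)=c\,\varphi_{\mu}(z/c)$ for dilations and $\Im\varphi_{\widetilde\mu}=\Im\varphi_{\mu}$ for translations are correct, and the limit exists because the quotient is monotone in $\varepsilon$), while $\varphi_{\nu\boxplus\nu}=2\varphi_{\nu}$ gives $L_{\nu\boxplus\nu}=2L_{\nu}$; since stability forces $\nu\boxplus\nu\sim\nu$, the identity $2L_{\nu}=L_{\nu}$ in $(0,\infty]$ together with $L_{\nu}\geq\sigma(\mathbb{R})>0$ (nondegeneracy) yields $L_{\nu}=\infty$. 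What each approach buys: the paper's verification is immediate once the explicit Voiculescu transforms are on the table, whereas yours is self-contained, uses only the definition of stability, and in fact shows the stronger statement that \emph{any} nondegenerate $\nu\in\mathcal{ID}(\boxplus)$ with $\nu\boxplus\nu\sim\nu$ has $L_{\nu}=\infty$ and hence no singular point $t_{\nu}$. You also correctly note that this argument is genuinely needed to get the strict inequality $L>1$ rather than just $L\geq1$.
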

\begin{proof} This is a direct consequence of Theorem 4.1 and the criterion (5.1). Indeed, one has $L=\infty$ in all cases of the index $\alpha$, which implies that $0 \notin F_{\nu}\left(\mathbb{R}\right)$.   
\end{proof}

In particular, the preceding result generalizes the superconvergence for measures with finite variance in \cite{Wang2010} to the entire free domain of attraction of the semicircular law. 

Notice that stable approximation to the free sum $S_n$ could fail for any choice of constants $a_n$ and $b_n$ if the common distribution $\mu$ of the summands $X_i$ does not belong to any free domain of attraction, but even in this case one may still have weak convergence along some subsequence $S_{k_n}$. The limit $\nu$ in this situation is necessarily $\boxplus$-infinitely divisible, and hence Theorem 4.1 still applies to this case. The law $\mu$ in this case is said to belong to the \emph{free domain of partial attraction} of the law $\nu$. In fact, a probability distribution is $\boxplus$-infinitely divisible if and only if its free domain of partial attraction is nonempty. It is also well known that the domain of partial attraction of a stable law is strictly larger than its domain of attraction in both free and classical theories. We refer to the paper \cite{BPata1999} for the details of these results.

\subsection{Poisson approximation} Here we study an example of freely infinitely divisible approximation relative to Poisson type limit theorems. Let $\mu$ be an arbitrary probability measure on $\mathbb{R}$, $\mu \neq \delta_0$, and let $\lambda >0$ be a given parameter. Recall that the \emph{compound free Poisson distribution} $\nu_{\lambda, \mu}$ with rate $\lambda$ and jump distribution $\mu$ is the weak limit of \[\left[(1-\lambda/n)\delta_0+(\lambda/n)\mu\right]^{\boxplus \,n}\]  as $n\rightarrow \infty$ \cite{Basic}. The law $\nu_{\lambda, \mu}$ is $\boxplus$-infinitely divisible, and its free generating pair is given by \[\gamma=\lambda\int_{\mathbb{R}}\frac{t}{1+t^2}\,d\mu(t), \quad d\sigma(t)=\lambda\frac{t^2}{1+t^2}\,d\mu(t).\] Thus, we see immediately that the numbers $L=\lambda$ and $t_{\nu_{\lambda,\mu}}=0$ in this case, which leads further to the following result:
\begin{prop} The origin is an atom of mass $1-\lambda$ for the law $\nu_{\lambda,\mu}$ if and only if the parameter $\lambda<1$. If $\lambda>1$, then the superconvergence phenomenon in any weak approximation $\mu_n^{\boxplus k_n} \rightarrow \nu_{\lambda,\mu}$ holds globally on $\mathbb{R}$.\end{prop}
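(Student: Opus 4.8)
The plan is to derive both assertions directly from Proposition 5.1 and Theorem 4.1, using the free generating pair of $\nu_{\lambda,\mu}$ displayed just above the statement, namely $\gamma=\lambda\int_{\mathbb{R}}\frac{t}{1+t^2}\,d\mu(t)$ and $d\sigma(t)=\lambda\frac{t^2}{1+t^2}\,d\mu(t)$. First I would record that $\nu_{\lambda,\mu}$ is nondegenerate, as required for Proposition 5.1: one has $\sigma=0$ only when $\frac{t^2}{1+t^2}=0$ holds $\mu$-almost everywhere, i.e.\ only when $\mu=\delta_0$, so the hypothesis $\mu\neq\delta_0$ guarantees $\sigma\neq0$ and hence that $\nu_{\lambda,\mu}$ is not a point mass.

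Next I would verify the two numerical quantities that govern the conclusions. Substituting $d\sigma$ into the singular integral of (5.1) gives the integrand $\frac{1+t^2}{t^2}\,d\sigma(t)=\lambda\,d\mu(t)$, so $L=\lambda$; the only point deserving a word of care is the behaviour near $t=0$, where $\frac{1+t^2}{t^2}$ blows up while $\frac{t^2}{1+t^2}$ vanishes, but their product is identically $1$, so the integral is genuinely equal to $\lambda<\infty$. Likewise $\frac{1}{t}\,d\sigma(t)=\lambda\frac{t}{1+t^2}\,d\mu(t)$ is integrable (its density is bounded by $\lambda/2$), so whenever $L\le 1$ Proposition 5.1(1) applies and its formula yields $t_{\nu_{\lambda,\mu}}=\gamma-\int_{\mathbb{R}}\frac{1}{t}\,d\sigma(t)=0$. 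These recover the identities $L=\lambda$ and $t_{\nu_{\lambda,\mu}}=0$ already recorded in the text.

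With these identities in hand the first assertion follows by a case analysis on $\lambda$. If $\lambda<1$, then $L=\lambda<1$, so Proposition 5.1(1) gives $0\in F_{\nu_{\lambda,\mu}}(\mathbb{R})$ with $t_{\nu_{\lambda,\mu}}=0$, and Proposition 5.1(2) gives $\nu_{\lambda,\mu}(\{0\})=1-L=1-\lambda>0$; thus the origin is an atom of mass $1-\lambda$. If $\lambda=1$, then $L=1$, so $0\in F_{\nu_{\lambda,\mu}}(\mathbb{R})$ and $t_{\nu_{\lambda,\mu}}=0$ once more, but Proposition 5.1(2) now shows $\nu_{\lambda,\mu}(\{0\})=0$, so the origin is not an atom. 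If $\lambda>1$, then $L=\lambda>1$, so the criterion $L\le 1$ fails, i.e.\ $0\notin F_{\nu_{\lambda,\mu}}(\mathbb{R})$, and Theorem 4.1(1) then shows that $\nu_{\lambda,\mu}$ has no atom at all. Combining the three cases, the origin is an atom precisely when $\lambda<1$, and its mass is then $1-\lambda$.

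Finally, for the second assertion I would simply invoke the observation just made that $\lambda>1$ forces $0\notin F_{\nu_{\lambda,\mu}}(\mathbb{R})$. Then case (1) of Theorem 4.1 applies with $U=\varnothing$ to any weak approximation $\mu_n^{\boxplus k_n}\to\nu_{\lambda,\mu}$: for all large $n$ the measure $\mu_n^{\boxplus k_n}$ is Lebesgue absolutely continuous with a continuous density on all of $\mathbb{R}$, these densities converge uniformly on $\mathbb{R}$ to the density of $\nu_{\lambda,\mu}$, and the $L^p$-convergence for $p>1$ is supplied by Theorem 4.1(3). This is exactly global superconvergence on $\mathbb{R}$. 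Since all the analytic content is already carried by Proposition 5.1 and Theorem 4.1, I do not expect any genuine obstacle; the only step requiring attention is the elementary verification that $L=\lambda$ (in particular that this integral converges) and that the vertical-limit formula for $t_{\nu_{\lambda,\mu}}$ evaluates to $0$.
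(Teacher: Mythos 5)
Your proposal is correct and follows exactly the route the paper intends: the paper states the computations $L=\lambda$ and $t_{\nu_{\lambda,\mu}}=0$ in the text immediately preceding the proposition and lets the result follow from Proposition 5.1 and Theorem 4.1, which is precisely what you carry out (with welcome extra care about the integrand at $t=0$ and the nondegeneracy hypothesis).
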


Note that the case $\mu=\delta_1$ reduces to the  approximation by Marchenko-Pastur law: \[d\nu_{\lambda,\delta_1}(t)=\begin{cases}
     \frac{\sqrt{4\lambda-(t-1-\lambda)^2}}{2\pi t}\chi(t)\,dt, &\text{if } \lambda \geq 1;\\
      (1-\lambda)\delta_0+\frac{\sqrt{4\lambda-(t-1-\lambda)^2}}{2\pi t}\chi(t)\,dt, &\text{if } 0<\lambda<1,
     \end{cases}\]
where $\chi$ stands for the indicator function of the open interval $((1-\sqrt{\lambda})^2,(1+\sqrt{\lambda})^2)$. Clearly, the law $\nu_{1,\delta_1}$ has no atom and yet  
$F_{\nu_{1,\delta_1}}(0)=0$.

\section*{Acknowledgements}
The first named author was supported in part
by a grant of the National Science Foundation. The second named author was supported by the NSERC Canada Discovery Grant RGPIN-402601. The third named author acknowledges support provided by a Leverhulme Trust Research Project Grant. He was also supported in part by NSFC (no. 11501423, no. 71301164), China Postdoctoral Science Foundation (no. 2015M570662) and
Fundamental Research Funds for the Central Universities (no. 201410500069).

\end{document}